\newcommand{\mathbbm}{\mathbb}
\newcommand{\Eb}{\mathbbm{E}}
\newcommand{\Rb}{\mathbbm{R}}
\newcommand{\Ib}{\mathbbm{I}}
\newcommand{\Ac}{\mathcal{A}}
\newcommand{\Fc}{\mathcal{F}}
\newcommand{\X}{\mathcal{X}}
\newcommand{\Xc}{\mathcal{X}}
\newcommand{\Rc}{\mathcal{R}}
\newcommand{\Uc}{\mathcal{U}}
\newcommand{\V}{\mathcal{V}}
\newcommand{\Pc}{\mathcal{P}}
\newcommand{\Vc}{\mathcal{V}}
\newcommand{\Zc}{\mathcal{Z}}
\renewcommand{\P}{\mathcal{P}}
\newcommand{\argmin}{\mathop{\text{argmin}}}
\renewcommand{\Omega}{\varOmega}
\newcommand{\1}{\mathds{1}}
\newcommand{\orgvec}{\vec}
\newtheorem{theorem}{Theorem}[section]
\newtheorem{lemma}[theorem]{Lemma}
\newtheorem{remark}[theorem]{Remark}
\newcommand{\eqdef}{\mathrel{\overset{\raisebox{-0.02em}{$\scriptstyle\vartriangle$}}{\,=\,}}}
\newcommand{\supp}{\mathop{\textup{supp}}}
\newcommand{\range}{\mathop{\textrm{range}}}
\DeclareMathOperator{\Proj}{\textrm Proj}
\newenvironment{tightitemize}{%
    \list{{\textup{$\bullet$}}}{\settowidth\labelwidth{{\textup{\qquad}}}
    \leftmargin\labelwidth \advance\leftmargin\labelsep
    \parsep 0pt plus 1pt minus 1pt \topsep 3pt \itemsep 3pt
    }}{\endlist}
\newcommand*\rel@kern[1]{\kern#1\dimexpr\macc@kerna}
\newcommand*\widebar[1]{%
  \begingroup
  \def\mathaccent##1##2{%
    \rel@kern{0.8}%
    \overline{\rel@kern{-0.8}\macc@nucleus\rel@kern{0.2}}%
    \rel@kern{-0.2}%
  }%
  \macc@depth\@ne
  \let\math@bgroup\@empty \let\math@egroup\macc@set@skewchar
  \mathsurround\z@ \frozen@everymath{\mathgroup\macc@group\relax}%
  \macc@set@skewchar\relax
  \let\mathaccentV\macc@nested@a
  \macc@nested@a\relax111{#1}%
  \endgroup
}
\title{Risk-Averse Control of Markov Systems with  Value Function Learning}
\author{Andrzej Ruszczy\'nski  and Shangzhe Yang\footnote{Department of Management Science and Information Systems, Rutgers University, email: rusz@rutgers.edu; shangzhe.yang@rutgers.edu}
}
\date{December 2023}
\begin{document}


\maketitle

\begin{abstract}We consider a control problem for a finite-state Markov system whose performance is evaluated by a coherent Markov risk measure. For each policy, the risk of a state is approximated by a function of its features, thus leading to a lower-dimensional policy evaluation problem, which involves non-differentiable stochastic operators. We introduce mini-batch transition risk mappings, which are particularly suited to our approach, and we use them to derive a robust learning algorithm for Markov policy evaluation. Finally, we discuss structured policy improvement in the feature-based risk-averse setting. The considerations are illustrated with an underwater robot navigation problem in which several waypoints must be visited and the observation results must be reported from selected transmission locations. We identify the relevant features, we test the simulation-based learning method, and we optimize a structured policy in a hyperspace containing all problems with the same number of relevant points.\\
\emph{Keywords:} Dynamic Risk Measures, Reinforcement Learning, Function Approximation, Robot Navigation.
\end{abstract}

\section{Introduction}

We consider a Markov decision process (MDP) with a finite state space $\Xc=\{1,\dots,n\}$, which may be very large,
the control space $\Uc$  (which is finite as well),  the  {feasible control set}  $U: \Xc\rightrightarrows \Uc$,
and the {controlled transition probability matrix} $P_{ij}(u)$, $i,j\in \Xc$, $u\in U(i)$; its {$i$-th} row, $P_i(u)$, is the distribution of the next state if the current state is $i$ and control is $u$.
 We use $\pi_t$ to denote the \emph{decision rule} to choose the control $u_t$ at time $t=0,1,\dots$,
and $\varPi=\{\pi_0,\pi_1,\dots\}$ is the \emph{policy}. In general, $\pi_t$ may be a function of $(i_0,i_1,\dots,i_t)$, the states visited
at times $0,1,\dots,t$, and produce a probability distribution on $U(i_t)$, but we shall be mainly concerned with \emph{stationary deterministic Markov policies},
in which $u_t = \pi(i_t)$ with a stationary { (time-invariant)} decision rule~$\pi$.

For any stationary deterministic Markov policy $\varPi=\{\pi,\pi,\dots\}$, and any initial state $i_0$, the sequence of states $\{i_t\}_{t=0,1,\dots}$ is a Markov chain, with the
transition probability matrix $P^\pi$ having entries $P^\pi_{ij} = P_{ij}(\pi(i))$. At each time $t=0,1,\dots$, if the state is $i_t$ and the control is $u_t$, a cost $c(i_t,u_t)$ is incurred, where $c:\Xc\times\Uc\to\Rb$. Thus, under a Markov policy $\varPi$, the resulting sequence of costs is
$c^\pi(i_t) = c(i_t,\pi_t(i_t))$, $t=0,1,\dots$.
{In standard formulations (see, \emph{e.g.},
\cite{Puterman1994,bertsekas2017dynamic}), the objective is to find a control policy that minimizes or maximizes the expected (discounted) sum or the expected average of stage-wise costs {or rewards} over a finite or infinite horizon.

Our goal is to use a dynamic measure of risk to evaluate the MDP's performance. The general theory of dynamic risk is discussed, \emph{inter alia}, in
\cite{Scandolo:2003,CDK:2006,RuszczynskiShapiro2006b,ADEHK:2007,PflRom:07,shapiro2021lectures}
 and the references therein. Our approach uses Markov dynamic risk measures introduced in \cite{Ruszczynski2010Markov}, and further developed in \cite{shen2013risk,lin2013dynamic,CavusRuszczynski2014a,CavusRuszczynski2014b,fan2018risk,fan2022process,bauerle2022markov}. In this theory, one obtains risk-averse versions of the Bellman equation, which are
 difficult to solve because of the nonlinear and nonsmooth nature of the risk measures and because of the size of the state space.
 To deal with this difficulty,
we are using value function approximation, in the spirit of \cite{sutton1988learning,bradtke1996linear,tsitsiklis1997,Sutton1998,melo2007q,yang2020reinforcement,jin2020provably}, and many references therein. However, because
of the nonlinear dependence of risk on the probability measure, these approaches are not directly applicable in our case.

Several works introduce models of risk into reinforcement learning: exponential utility functions \citep{Borkar2001, Borkar2002,basu2008learning,fei2022cascaded} and mean-variance models
\citep{Tamar2012, Prashanth2014}. A few later studies propose heuristic approaches involving specific coherent risk measures, such as
CVaR in the objective or constraints \citep{Chow2014,Chow2015a,ma2018risk}.
Generative model value iteration with coherent measures was analyzed by \cite{yu2018approximate}. Risk-aware Q-learning with Markov risk measures is considered by \cite{huang2017risk}.
A risk-averse policy gradient method was considered in
\cite{ma2017risk}. All these methods apply to problems with a small number of state-action pairs allowing exhaustive experimentation.

Value function approximations in the context of distributionally robust MDPs were considered by \cite{Tamar2014}.
Ref. \cite{Tamar2017} studies the policy gradient approach for Markov risk measures and use it in an actor-critic type algorithm. Both approaches are heuristic. Policy evaluation with linear architecture and Markov risk measures by a method of temporal differences was analyzed by \cite{kose2021risk}, and asymptotic convergence was proved.  The recent work of \cite{yin2022near} uses variance to control the value iteration procedure.
Recently, \cite{lamrisk} proposed a version
of a risk-aware reinforcement learning method with coherent risk measures and function approximation. However, at each iteration, it uses extensive experimentation (double sampling)  to statistically estimate the risk with high accuracy and high probability at each state-control pair.

In section \ref{s:markov-risk}, we briefly outline the main results of the theory of Markov risk measures and the resulting dynamic programming techniques.
In section \ref{s:mini-batch}, we introduce a new class of recursive Markov risk measures based on mini-batch transition risk mappings and study their properties. In section \ref{s:policy-eval}, we use them to develop a policy evaluation
method by simulation. We also propose a parametric policy improvement scheme based on the evaluations. Finally, in section \ref{s:robot}, we discuss the applications of these methods to the challenging problem of underwater robot navigation.

\section{Markov Risk Measures}
\label{s:markov-risk}

Consider a finite-horizon setting first.
Suppose the actions are generated by a deterministic policy $\varPi$.
A \emph{dynamic risk measure} evaluates the sequence of the random costs $Z_t=c(i_t,u_t)$, $t=0,1,\dots,T-1$ and $Z_T=c_T(i_T)$ in a risk-aware fashion. We denote by $\Zc_t$ the space
of all real functions of the history $\{i_0,\dots,i_t\}$.
 Because of the need to evaluate future costs at any period,  a dynamic risk measure is a collection of
 \emph{conditional risk measures} $\rho_{t,T}:\Zc_t \times \dots \times \Zc_T\to\Zc_t$, $t=0,1,\dots,T$, for which  we postulate three properties:

 \begin{tightitemize}
 \item[\textbf{Normalization:}] $\rho_{t,T}(0,\dots,0)=0$;
\item[\textbf{Monotonicity:}] If $(Z_{t},\dots,Z_T) \leq (W_{t},\dots,W_T)$, then $\rho_{t,T}(Z_{t},\dots,Z_T) \le \rho_{t,T}(W_{t},\dots,W_T)$;
\item[\textbf{Translation:}] $\rho_{t,T}(Z_{t},\dots,Z_T) = Z_t + \rho_{t,T}(0,Z_{t},\dots,Z_T)$.
\end{tightitemize}
Here and below, all inequalities are understood component-wise.

 Fundamental for such a nonlinear dynamic cost evaluation is \emph{time consistency},
 discussed in various forms by \cite{CDK:2006,ADEHK:2007,Ruszczynski2010Markov,cheridito2011composition}:
\emph{A dynamic risk measure is time consistent if  for every $t=0,1,\dots,T-1$, if $Z_t=W_t$ and $\rho_{t+1,T}(Z_{t+1},\dots,Z_T) \le \rho_{t+1,T}(W_{t+1},\dots,W_T)$, then}
$\rho_{t,T}(Z_{t},\dots,Z_T) \le \rho_{t,T}(W_{t},\dots,W_T)$.
As proved by \cite{Ruszczynski2010Markov}, such measures, under the conditions specified above,
must have the recursive form:
\[
\rho_{t,T}(Z_{t:T}) = Z_t + \rho_t\Big(Z_{t+1}+ \rho_{t+1}\big(Z_{t+2}+ \dots + \rho_{T-1}(Z_T)\cdots\big)\Big),
\]
 where each $\rho_t(\cdot)$ is a one-step conditional risk measure. This formula, generalizing the tower property of conditional expectations, is pertinent to our approach.

 \textit{Markov risk measures} evaluate the risk-adjusted value of future costs $(Z_{t},\dots,Z_T)$ in a Markov system with a Markov control policy
$u_t = \pi_t(i_t)$, $t=0,1,\dots,T-1$,
in such a way that the value of the future cost sequence is a function of the current state:
$\rho_{t,T}(Z_{t},\dots,Z_T) = v_{t,T}^\pi(i_t)$, where $v_{t,T}^\pi \in \Vc =\Rb^{n}$.
 This, combined with the properties specified above, implies 
 that \emph{transition risk mappings}
 $\sigma_{t,i}:  \Pc(\Xc)\times \Vc\to \Rb$, $t=0,1,\dots,T-1$, exist such that the value of each state
 for the Markovian policy $\pi$ can be evaluated by the following procedure:
 \begin{equation}
\label{DP-risk-finite}
\begin{aligned}
v_{t,T}^\pi(i) &=  c(i,\pi_t(i)) +  \sigma_{t,i}\big(P_i^{\pi_t}, v^\pi_{t+1,T}\big),
\quad  i\in \Xc, \quad t=0,1,\dots,T-1;\\
v_{T,T}^\pi(i) &= c_T(i), \quad  i \in \Xc;
\end{aligned}
\end{equation}
see \citep{Ruszczynski2010Markov,fan2022process} for the detailed derivation.
The first argument of $\sigma_{t,i}(\cdot,\cdot)$ is a probability measure on $\Xc$
(an element of the simplex in $\Rb^n$). The space of these measures is denoted by
$\Pc(\Xc)$. We use the subscript $(t,T)$ for the value function because
we shall increase $T$ to infinity in due course.

One may remark here that the risk-neutral model is a special case of \eqref{DP-risk-finite}, with the bilinear
$\sigma_{t,i}\big(P^\pi_i, v^\pi_{t+1,T}\big) =  P^\pi_{i} v^\pi_{t+1,T}$,
but we are interested in mappings that depend on $P^\pi_i$ and on $v^\pi_{t+1,T}$ in a nonlinear way.

In the infinite-horizon case, we consider the sequence of discounted costs $Z_t=\alpha^{t-1} c^\pi(i_t)$, $t=0,1,\dots$,
with some discount factor $\alpha\in (0,1)$. We also assume that
the transition risk mappings are stationary (do not depend on $t$)
and satisfy the axioms of a \emph{coherent measure of risk} \cite{ArtznerDelbaenEberEtAl1999}:
\begin{tightitemize}
\item[\textbf{Convexity:}] $\sigma_i(P_i,\lambda v + (1-\lambda) w) \leq  \lambda \sigma_i(P_i,v) + (1-\lambda) \sigma_i(P_i,w)$,  $\forall\, \lambda \in [0,1]$, $\forall\, v,w\in \Vc$;
\item[\textbf{Monotonicity:}] If $v \leq w$ (componentwise) then $\sigma_i(P_i,v) \leq \sigma_i(P_i,w)$;
\item[\textbf{Translation equivariance:}] $\sigma_i(P_i,v + \beta\1) = \sigma_i(P_i,v) + \beta$, for all $\beta \in \Rb$ (here, $\1$ is a vector of 1's);
\item[\textbf{Positive homogeneity:}] $\sigma_i(P_i,\alpha v) = \alpha \sigma_i(P_i,v)$, for all \mbox{$\alpha \geq 0$}.
\end{tightitemize}
Then, as demonstrated in \cite{Ruszczynski2010Markov}, the {infinite-horizon discounted risk measure},
\[
v^\pi(i) \eqdef \lim_{T\to\infty}v^\pi_{0,T}(i) ,\quad  i\in \Xc,
\]
is well-defined and satisfies the \emph{risk-averse policy evaluation equation}:
\begin{equation}
\label{DP-risk-infinite}
v^\pi(i) = c^\pi(i) + \alpha \sigma_i\big(P^\pi_i, v^\pi(\cdot)\big), \quad i\in \Xc.
\end{equation}
In the vector form, we can write
\begin{equation}
\label{policy-eval}
v^\pi = c^\pi + \alpha \orgvec{\sigma}\big(P^\pi, v^\pi\big),
\end{equation}
with the mapping $\orgvec{\sigma}\big(P^\pi, v^\pi\big) = \big\{ \sigma_i\big(P^\pi_i, v^\pi(\cdot)\big)\big\}_{i \in \Xc}$. Furthermore, the optimal value function $v^*(\cdot)$
satisfies the \emph{dynamic programming equation}
\begin{equation}
\label{DP-risk-optimal}
v^*(i) = \min_{u\in U(i)}\big\{ c(i,u) + \alpha \sigma_i\big(P_i^u, v^*(\cdot)\big)\big\}, \quad i\in \Xc,
\end{equation}
and the deterministic Markov policy defined by the minimizers above is the best
among all history-dependent policies.
The Reader is referred to \cite{Ruszczynski2010Markov,fan2022process,bauerle2022markov}
for the detailed derivation of these equations, for much more general state and control spaces.

\begin{remark}
{In MDP's in which the stage-wise costs at $t=1,\dots,T-1$ depend on the current state $i_t$, current control $u_t$, and the next state $i_{t+1}$, via a function $c:\Xc\times\Uc\times\Xc\to\Rb$,
and the terminal cost is $c_T(x_T)$, the policy evaluation equation \eqref{DP-risk-finite} takes on the form:
\[
v^\pi_{t,T}(i) =   \sigma_i\big(P^\pi_{i}, c(i,\pi_t(i),\cdot) + \alpha v^\pi_{t+1,T}(\cdot)\big),\  i\in \Xc, \  t=1,\dots,T-1;
\]
the final value is $v^\pi_{T,T}(i) = c_T(i)$, as before. Accordingly, the infinite horizon stationary Markov policy evaluation equation \eqref{DP-risk-infinite} and the dynamic programming equation \eqref{DP-risk-optimal}
are:
\begin{align*}
v^\pi(i) &=  \sigma_i\big(P^\pi_i, c(i,\pi(i),\cdot) + \alpha v^\pi(\cdot)\big), \quad i\in \Xc,\\
v^*(i) &= \min_{u\in U(i)} \sigma_i\big(P^u_i, c(i,u,\cdot) + \alpha v^*(\cdot)\big), \quad i\in \Xc,
\end{align*}
respectively.
}
\end{remark}
\begin{remark}
    \label{r:markov}
The derivations that lead to the policy evaluation equation \eqref{DP-risk-infinite} and the dynamic programming equation \eqref{DP-risk-optimal} are heavily dependent on the
assumptions that the dynamic measure of risk is time-consistent and Markovian. An alternative approach is to consider an overall risk measure of the total cost:
$\varrho\big(\sum_{t=0}^\infty \alpha^{t} c(i_t,u_t)\big)$. For some specific measures of risk $\varrho(\cdot)$, such as the Average Value at Risk, dynamic programming relations may be derived with the use of additional state variables; see \cite{bauerle2011markov,uugurlu2017controlled,ding2022sequential}.
\end{remark}

Eq. \eqref{DP-risk-optimal} can be solved, in principle, by the risk-averse versions of the value iteration method or the policy iteration method, which we quickly review below.
After evaluating a policy $\pi^k$ by \eqref{DP-risk-infinite}, we use the right-hand side of  \eqref{DP-risk-optimal} to find the next policy:
\begin{equation}
\label{policy-iteration-general}
\pi^{k+1}(i) = \argmin_{u\in U(i)}\big\{ c(i,u) + \alpha \sigma_i\big(P_i^u, v^{\pi^k}(\cdot)\big)\big\}, \quad i\in \Xc;
\end{equation}
the method stops when $\pi^k(i)$ is an equally good solution of \eqref{policy-iteration-general} for all $i$, and thus $v^{\pi^k}$ satisfies \eqref{DP-risk-optimal}.
The main difficulty is the implementation of this method when the state space is very large.


In addition to the
axioms of a coherent measure of risk, we shall impose additional conditions on the transition risk mappings involving their dependence on the probability measure.
For two pairs  $(P,v)$ and $(Q,w)$ the notation $(P,v){\sim} (Q,w)$ means that $P\{v \le \eta\} = Q\{ w \le \eta\}$ for all $\eta\in \Rb$ (both models have identical distribution functions). We postulate two natural properties of a transition risk mapping.
\begin{tightitemize}
\item[\textbf{Law Invariance:}] If
	 \mbox{$(P_i,v) {\sim} (Q_i,w)$ then $\sigma_i(P_i,v) = \sigma_i(Q_i,w)$};
\item[\textbf{Support Property:}] $\sigma_i(P_i,v) = \sigma_i(P_i,\1_{\supp(P_i)}v)$.
\end{tightitemize}
The symbol $\1_{\supp(P_i)}$ denotes the characteristic function of the support set of $P_i$.
The support property means that only these values $v(j)$ matter, for which a transition to state $j$ is possible.
It is automatic for the expected value but needs to be
required for general operators.
An
example is the \emph{mean--semideviation mapping} \cite{ogryczak1999stochastic}:
\begin{equation}
\label{msd-form}
\sigma_i(P_i,v) = \sum_{j\in \Xc}P_{ij}v(j)  + c \sum_{j\in \Xc}P_{ij}\Big[ v(j) -  \sum_{k\in \Xc} P_{ik}v(k)\Big]_+,
\end{equation}
with $c\in [0,1]$. Another example is the \emph{Average Value at Risk} (\cite{OgryczakRuszczynski2002,RockafellarUryasev2002}):
\begin{equation}
\label{AVaR-form}
\sigma_i(P_i,v) = \min_{\eta\in\Rb} \big\{ \eta + \frac{1}{\alpha} \sum_{j\in \Xc}P_{ij} \max (0, v(j) - \eta) \big\} , \quad \alpha \in (0,1].
\end{equation}
Yet another example, rarely used in the risk measure theory and practice, due to its conservative nature, but very relevant for us, is the \emph{worst-case mapping}:
\begin{equation}
\label{sup-form}
\sigma_i(P_i,v) = \max\big\{ v(j): P_{ij}>0,\ j\in \Xc\big\} .
\end{equation}
All examples above are coherent and law-invariant risk mappings having the support property.

The main difficulty associated with these and other mappings derived from coherent measures of risk is their statistical estimation.

\section{Mini-Batch Transition Risk Mappings}
\label{s:mini-batch}

We now propose a class of transition risk mappings that are more amenable to statistical estimation.

Suppose $\sigma_i: \Pc(\Xc)\times \Vc \to \Rb$ is a transition risk mapping. 
 If we draw a sample
$j^{1:N} = (j^1,\dots,j^N)$, with $N$ independent components distributed according to $P_i$ in $\Xc$, we obtain a random empirical measure
on $\Xc$:
$P^{(N)}(j^{1:N}) = \frac{1}{N}\sum_{m=1}^N \delta_{j^m}$, where $\delta_j$ represents the unit mass concentrated at $j$.
It is a $\Pc(\Xc)$-valued random variable on the product space $\Xc^N$ with the product measure $(P_i)^N$.
Using it as the argument of $\sigma_i$,
we obtain a random transition risk mapping $\sigma_i\big(P^{(N)}(j^{1:N}), v\big)$.
Finally, we define
\begin{equation}
\label{mini-batch}
\sigma_i^{(N)}(P_i,v) = \Eb_{j^{1:N}\sim (P_i)^N}\big\{ \sigma_i\big(P^{(N)}(j^{1:N}), v\big)\big\}.
\end{equation}
We call it the\emph{ mini-batch transition risk mapping}.

A simple example is the mini-batch transition risk mapping derived from \eqref{sup-form}:
\begin{equation}
\label{batch-sup}
\sigma_i^{(N)}(P_i,v) = \Eb_{j^{1:N}\sim (P_i)^N} \Big[ \max_{1 \le m \le N} v(j^m) \Big].
\end{equation}

We can also derive a mini-batch version of \eqref{msd-form} or \eqref{AVaR-form}. For example, the mini-batch Average Value at Risk has the following form:
\begin{equation}
\label{batch-AVaR}
\sigma_i^{(N)}(P_i,v) = \Eb_{j^{1:N}\sim (P_i)^N} \Big[ \min_{\eta\in\Rb} \big\{ \eta + \frac{1}{\alpha N}\sum_{m\in \Xc}\max (0, v(j^m) - \eta) \big\} \Big].
\end{equation}

It is evident that we may mix the mini-batch risk mappings
with the expected value, obtaining
\begin{equation}
\label{E-mix-c}
\sigma_i(P_i,v) =  (1-c)\Eb_{j^{1:N}\sim (P_i)^N}\bigg[ \frac{1}{N} \sum_{m=1}^N v(j^m) \bigg] + c \sigma_i^{(N)}(P_i,v), \quad c\in [0,1].
\end{equation}

The following observation proves that the mini-batch versions inherit the properties of the ``base'' risk measures.

\begin{lemma}\label{l:mini-batch}
If the transition risk mapping $\sigma_i(\cdot,\cdot)$ is convex (monotonic, translation equivariant, positively homogeneous, has the support property, or is law invariant)
then the mini-batch transition risk mapping $\sigma_i^{(N)}(\cdot,\cdot)$ has the corresponding property as well.
\end{lemma}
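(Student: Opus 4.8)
The plan is to verify each property separately, in each case exploiting the definition \eqref{mini-batch} of $\sigma_i^{(N)}$ as an expectation, over the product measure $(P_i)^N$, of $\sigma_i$ evaluated at the random empirical measure $P^{(N)}(j^{1:N})$. The recurring mechanism is that $\sigma_i^{(N)}(P_i,\cdot)$ is an average (a convex combination, in a generalized sense) of maps $v\mapsto\sigma_i\big(P^{(N)}(j^{1:N}),v\big)$, so any property that is stable under taking expectations of a fixed family of maps will pass through. For \textbf{monotonicity}: if $v\le w$ componentwise, then for every realization $j^{1:N}$ we have $\sigma_i\big(P^{(N)}(j^{1:N}),v\big)\le\sigma_i\big(P^{(N)}(j^{1:N}),w\big)$ by monotonicity of $\sigma_i$, and monotonicity of the expectation finishes it. For \textbf{convexity}: fix $\lambda\in[0,1]$ and $v,w$; for each realization, convexity of $\sigma_i$ in its second argument gives $\sigma_i\big(P^{(N)},\lambda v+(1-\lambda)w\big)\le\lambda\sigma_i\big(P^{(N)},v\big)+(1-\lambda)\sigma_i\big(P^{(N)},w\big)$, and taking expectations (which is linear and monotone) yields the claim. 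For \textbf{positive homogeneity} and \textbf{translation equivariance}: for each realization, $\sigma_i\big(P^{(N)},\beta v\big)=\beta\sigma_i\big(P^{(N)},v\big)$ for $\beta\ge0$, and $\sigma_i\big(P^{(N)},v+\beta\1\big)=\sigma_i\big(P^{(N)},v\big)+\beta$; since $\Eb$ is linear, both identities survive the expectation (note $\Eb[\beta]=\beta$ for the constant).

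The \textbf{support property} requires a little more care, because the support of the empirical measure $P^{(N)}(j^{1:N})$ is a random subset of $\supp(P_i)$, not equal to it. The key observation is: for any realization $j^{1:N}$ with all components in $\supp(P_i)$, we have $\1_{\supp(P^{(N)}(j^{1:N}))}v$ and $\1_{\supp(P^{(N)}(j^{1:N}))}\big(\1_{\supp(P_i)}v\big)$ equal, because $\supp(P^{(N)}(j^{1:N}))\subseteq\supp(P_i)$ forces the two truncations to agree. Applying the support property of $\sigma_i$ twice, once to each argument, gives
\[
\sigma_i\big(P^{(N)}(j^{1:N}),v\big)=\sigma_i\big(P^{(N)}(j^{1:N}),\1_{\supp(P^{(N)}(j^{1:N}))}v\big)=\sigma_i\big(P^{(N)}(j^{1:N}),\1_{\supp(P_i)}v\big),
\]
and then using the support property once more in the reverse direction,
\[
\sigma_i\big(P^{(N)}(j^{1:N}),\1_{\supp(P_i)}v\big)=\sigma_i\big(P^{(N)}(j^{1:N})\,,\,v'\big),\quad v':=\1_{\supp(P_i)}v,
\]
so the integrands for $v$ and for $v'$ coincide for $(P_i)^N$-almost every realization; taking expectations gives $\sigma_i^{(N)}(P_i,v)=\sigma_i^{(N)}(P_i,\1_{\supp(P_i)}v)$.

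Finally, for \textbf{law invariance}, suppose $(P_i,v)\sim(Q_i,w)$, i.e.\ $P_i\{v\le\eta\}=Q_i\{w\le\eta\}$ for all $\eta$. The natural route is to set up a coupling: construct a single probability space carrying $j^{1:N}\sim(P_i)^N$ and $k^{1:N}\sim(Q_i)^N$ such that $v(j^m)=w(k^m)$ for every $m$ almost surely — this is possible because the one-dimensional distributions of $v$ under $P_i$ and of $w$ under $Q_i$ agree, so one can couple each coordinate via the generalized inverse (quantile) transform and take products across $m=1,\dots,N$. Under this coupling, the multisets $\{v(j^m)\}_{m=1}^N$ and $\{w(k^m)\}_{m=1}^N$ are equal almost surely, hence the empirical distributions of $v$ under $P^{(N)}(j^{1:N})$ and of $w$ under $P^{(N)}(k^{1:N})$ coincide, i.e.\ $\big(P^{(N)}(j^{1:N}),v\big)\sim\big(P^{(N)}(k^{1:N}),w\big)$ almost surely. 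Law invariance of $\sigma_i$ then gives $\sigma_i\big(P^{(N)}(j^{1:N}),v\big)=\sigma_i\big(P^{(N)}(k^{1:N}),w\big)$ almost surely, and taking expectations (the law of each side depends only on the respective marginal, so the coupling does not affect the values of the two expectations) yields $\sigma_i^{(N)}(P_i,v)=\sigma_i^{(N)}(Q_i,w)$. I expect the \textbf{law invariance} step to be the main obstacle: it is the only property that is not a pointwise-in-realization statement, and making the coupling argument airtight — in particular checking that the empirical measures really do carry the same distribution of values and that nothing is lost in passing to the expectation — is where the actual work lies. An alternative, perhaps cleaner, argument is to observe that $\sigma_i^{(N)}(P_i,v)$ depends on $(P_i,v)$ only through the distribution of $v(j^1)$ under $P_i$ (since the empirical measure's value-distribution is a symmetric function of the $N$ i.i.d.\ draws of $v(j^m)$), which immediately gives law invariance; I would present this as the primary argument and mention the coupling as the mechanism justifying it.
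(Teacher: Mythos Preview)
Your proposal is correct. For the first five properties the paper simply declares them ``evident'' and moves on, so your explicit verifications are more detailed than what the paper provides but entirely in the same spirit.

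For law invariance the two arguments differ in packaging. Your primary route is a coupling: you construct $j^{1:N}\sim(P_i)^N$ and $k^{1:N}\sim(Q_i)^N$ on a common space with $v(j^m)=w(k^m)$ a.s., deduce that the realized empirical value-distributions coincide, invoke law invariance of $\sigma_i$ pointwise, and then pass to expectations. The paper instead extracts a symmetric measurable function $\varPsi:\Rb^N\to\Rb$ with $\sigma_i\big(P^{(N)}(j^{1:N}),v\big)=\varPsi\big(v(j^1),\dots,v(j^N)\big)$ (this is exactly the content of law invariance applied to empirical measures), observes that $(P_i,v)\sim(Q_i,w)$ forces the vectors $\big(v(j^1),\dots,v(j^N)\big)$ and $\big(w(k^1),\dots,w(k^N)\big)$ to be identically distributed, and concludes by equality of expectations of $\varPsi$. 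Your ``alternative, perhaps cleaner'' argument at the end is precisely this: it is the paper's proof in a sentence. The coupling buys you an almost-sure equality of integrands (conceptually concrete), while the paper's $\varPsi$-formulation buys you a direct equality-in-law statement that avoids constructing the coupling at all; both are short and neither is materially more general.
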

\begin{proof}
The first five properties are evident; only the law invariance requires proof. Consider two pairs:
$\big(\frac{1}{N}\sum_{m=1}^N \delta_{j^m},v\big)$ and $\big(\frac{1}{N}\sum_{m=1}^N \delta_{k^m},w\big)$.
If, for some permutation $\lambda$ of $\{1,\dots,N\}$,
\[
\big(v(j^1),\dots,v(j^N)\big) = \big(w(k^{\lambda(1)}),\dots,w(k^{\lambda(N)})\big),
\]
then the two pairs have identical distribution functions. By the law invariance of $\sigma_i(\cdot,\cdot)$, we have\break
$\sigma_i\big(\frac{1}{N}\sum_{m=1}^N \delta_{j^m},v\big) = \sigma_i\big(\frac{1}{N}\sum_{m=1}^N \delta_{k^m},w\big)$.
Thus a measurable function $\varPsi:\Rb^N\to \Rb$ exists, such that
\[
\sigma_i( P_i^{(N)}, v)=\varPsi\big(v(j^1),\dots, v(j^N)\big),
\]
 and for every permutation $\lambda$ of $\{1,\dots,N\}$,
$\varPsi\big(v(j^1),\dots, v(j^N)\big)=\varPsi\big(v(j^{\lambda(1)}),\dots, v(j^{\lambda(N)})\big)$.
Now, if $(P_i,v) \sim(Q_i,w)$, then for $j^{1:N}\sim P_i^N$ and $k^{1:N}\sim Q_i^N$, the vectors $\big(v(j^1),\dots,v(j^N)\big)$ and
$\big(w(k^1),\dots,w(k^N)\big)$ have the same distribution. Therefore,
\begin{align*}
\sigma_i^{(N)}(P_i,v) &= \Eb_{j^{1:N}\sim P_i^N}\big\{ \varPsi\big(v(j^1),\dots, v(j^N)\big)\big\}\\
 &= \Eb_{k^{1:N}\sim Q_i^N}\big\{ \varPsi\big(w(k^1),\dots, w(k^N)\big)\big\}
= \sigma_i^{(N)}(Q_i,w),
\end{align*}
which verifies the law invariance.
\end{proof}

\section{Policy Evaluation with Risk Approximation}
\label{s:policy-eval}

If the state space $\Xc$ is very large, it is impossible to tabulate the function $v^\pi(\cdot)$. Instead, we use an approximation
\[
  v(i) \approx \tilde{v}(i) \triangleq  f(\varPhi_i; \theta), \quad i \in \Xc,
  \]
of $M\ll n$ \emph{features} $\varPhi_i = \big[\varPhi_{i,1},\dots ,\varPhi_{i,M}\big]$, $i\in \Xc$, and an identical learning model $f:\Rb^M \times \Rb^d \to \Rb$ with parameters
$\theta\in \Rb^d$, where $d \ll n$.

Compactly,
$\tilde{v} = F(\varPhi; \theta)$,
with the diagonal structure: $F_i(\varPhi; \theta) = f(\varPhi_i; \theta)$, $i\in \Xc$.
An important special case is the linear model:
\begin{equation}
\label{feature-linear}
\tilde{v} =  \varPhi \theta,
\end{equation}
in which $M=d$. Thus, $\tilde{v}(i) =  \varPhi_i \theta$, $i\in \Xc$.

Such an approach is widespread in the reinforcement learning literature.
 The linear value function approximation approaches to MDPs have a long history; see
\cite{bradtke1996linear,sutton1988learning,tsitsiklis1997analysis,Sutton1998,melo2007q}
and many references therein. The concept of a linear (mixture) MDP \cite{bradtke1996linear,melo2007q}, originating from the theory of linear bandits \citep{bubeck2012regret,lattimore2020bandit},  is pertinent in this setting.
It has been recently used by
\cite{ayoub2020model,yang2020reinforcement,jin2020provably,modi2020sample} to develop complexity bounds for expected value RL algorithms.
The generalization to bilinear models by \cite{du2021bilinear} extends the applicability of this class. The state aggregation or lumping approaches of \cite{dong2019provably,katehakis2012successive} are also related to this model.
Our intention is to extend the value approximation approach to the risk-averse case.

\subsection{Abstract Policy Evaluation}

Suppose the initial state $i_1$ is random, with probability distribution $P_0$. If the Markov chain created by the policy $\pi$ is an unichain,
 we denote by $q^\pi$  its stationary distribution. If the chain is absorbing, as in our application in section \ref{s:robot}, we
 consider its restarted version, with the new transition probabilities $P_0$ at each absorbing state. If we exclude the periodic case,
 it becomes an unichain again.

As in the expected value case, we need to project the right-hand side of the risk-averse policy evaluation equation  \eqref{policy-eval} on the set in which the left-hand side lives. Therefore, we define the operator:
\begin{equation}
\label{d_op}
D^\pi(v) = \Proj_\Rc^\pi\big(c^\pi+{\alpha}  \orgvec{\sigma}(P^\pi, v)\big),\quad v\in \Vc,
\end{equation}
where $\Rc = \range(F(\varPhi;\cdot))$,
$\Proj_\Rc^\pi(w) = \argmin_{v\in \Rc } {\|v-w\|}_{q^\pi}^2$, and
the norm $\|v\|^2_q = \sum_{i\in \Xc} q_i v(i)^2$. The operator
$\Proj_\Rc^\pi(\cdot)$ is well-defined if the set $\Rc$ is convex and closed, which we shall assume from now on.

The projected risk-averse policy evaluation equation has the form:
\begin{equation}
\label{PE-proj}
\tilde{v}^\pi = D^\pi(\tilde{v}^\pi).
\end{equation}

To establish relevant properties of \eqref{d_op}, we use the dual representation of a coherent measure of risk \cite{RuszczynskiShapiro2006a},
which in our case can be stated as follows. For every $i \in \X$, a convex, closed and bounded
set $A_i(P_i) \subset \P(\X)$ exists, such that
$\sigma_i(P_i, v) = \max_{\mu_i \in A_i(P_i)}  \mu_i v$, $v \in \V$.
\begin{lemma}
\label{l:ne}
The operator $D^\pi$ is Lipschitz continuous in the $q^\pi$-norm with the modulus $\alpha\sqrt{1+\varkappa}$, where
\[
\varkappa =  \max \bigg\{ \frac{|\mu_{ij}- p_{ij}|}{p_{ij}}: \mu_i \in A_i(P_i), p_{ij} > 0, i, j \in \X \bigg\},
\]
is the distortion coefficient.
\end{lemma}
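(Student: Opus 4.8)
The plan is to separate the projection from the risk map and then bound the latter in the weighted norm. Since $\Rc$ is convex and closed, the metric projection $\Proj_\Rc^\pi$ onto $\Rc$ is non-expansive in $\|\cdot\|_{q^\pi}$ (we use here that $q^\pi$ has full support, so that $\|\cdot\|_{q^\pi}$ is a genuine inner-product norm; otherwise one restricts to the recurrent class). Consequently, for any $v,w\in\Vc$,
\[
\|D^\pi(v)-D^\pi(w)\|_{q^\pi}\le\big\|\big(c^\pi+\alpha\orgvec\sigma(P^\pi,v)\big)-\big(c^\pi+\alpha\orgvec\sigma(P^\pi,w)\big)\big\|_{q^\pi}=\alpha\,\|\orgvec\sigma(P^\pi,v)-\orgvec\sigma(P^\pi,w)\|_{q^\pi},
\]
and it remains to prove that $\orgvec\sigma(P^\pi,\cdot)$ is Lipschitz in the $q^\pi$-norm with modulus $\sqrt{1+\varkappa}$.

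Then I would fix $v,w\in\Vc$, put $\delta=v-w$ and $p_{ij}=P^\pi_{ij}$, and argue componentwise. From the dual representation $\sigma_i(P^\pi_i,v)=\max_{\mu_i\in A_i(P^\pi_i)}\mu_i v$, choosing the maximizer of whichever of $\sigma_i(P^\pi_i,v)$ and $\sigma_i(P^\pi_i,w)$ is the larger, one obtains the elementary estimate
\[
\big|\sigma_i(P^\pi_i,v)-\sigma_i(P^\pi_i,w)\big|\le\max_{\mu_i\in A_i(P^\pi_i)}|\mu_i\delta|.
\]
The crucial step is to bound $|\mu_i\delta|$. Each $\mu_i\in A_i(P^\pi_i)$ is a probability vector, so $\sum_j\mu_{ij}=1$, and Cauchy--Schwarz gives $|\mu_i\delta|^2=\big(\sum_j\mu_{ij}\delta(j)\big)^2\le\sum_j\mu_{ij}\delta(j)^2$. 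The support property of $\sigma_i$ allows us to take each $\mu_i$ absolutely continuous with respect to $P^\pi_i$, i.e.\ $\mu_{ij}=0$ whenever $p_{ij}=0$; and for $p_{ij}>0$ the definition of the distortion coefficient yields $\mu_{ij}=p_{ij}+(\mu_{ij}-p_{ij})\le(1+\varkappa)p_{ij}$. Hence $\mu_{ij}\le(1+\varkappa)p_{ij}$ for every $j$, so $|\mu_i\delta|^2\le(1+\varkappa)\sum_j p_{ij}\delta(j)^2$, uniformly in $\mu_i$.

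Finally I would reassemble: squaring the previous display and combining it with the last bound gives $\big|\sigma_i(P^\pi_i,v)-\sigma_i(P^\pi_i,w)\big|^2\le(1+\varkappa)\sum_j p_{ij}\delta(j)^2$; multiplying by $q^\pi_i$, summing over $i$, swapping the two sums, and using the stationarity identity $\sum_i q^\pi_i p_{ij}=q^\pi_j$ produces
\[
\|\orgvec\sigma(P^\pi,v)-\orgvec\sigma(P^\pi,w)\|_{q^\pi}^2=\sum_i q^\pi_i\big|\sigma_i(P^\pi_i,v)-\sigma_i(P^\pi_i,w)\big|^2\le(1+\varkappa)\sum_j q^\pi_j\delta(j)^2=(1+\varkappa)\|v-w\|_{q^\pi}^2,
\]
which combined with the first display gives the modulus $\alpha\sqrt{1+\varkappa}$. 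The main obstacles I anticipate are the two points responsible for the sharp constant: justifying $\mu_i\ll P^\pi_i$ from the support/law-invariance hypotheses, so that $\varkappa$ genuinely controls the whole density $\mu_{ij}/p_{ij}$; and applying Cauchy--Schwarz in the form that exploits $\sum_j\mu_{ij}=1$, since the naive split $|\mu_i\delta|\le|\sum_j p_{ij}\delta(j)|+\sum_j|\mu_{ij}-p_{ij}|\,|\delta(j)|$ would only deliver the weaker modulus $\alpha(1+\varkappa)$. The role of stationarity of $q^\pi$ in the last display should also be emphasized, as it is what makes the weighted norm propagate cleanly from the ``$j$'' index back to the ``$i$'' index.
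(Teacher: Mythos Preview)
Your proposal is correct and follows essentially the same approach as the paper, which simply cites \cite[Lem.~1]{kose2021risk} together with the nonexpansiveness of $\Proj_\Rc^\pi$ in $\|\cdot\|_{q^\pi}$; you have written out precisely the argument behind that reference---nonexpansive projection, the dual bound $|\sigma_i(v)-\sigma_i(w)|\le\max_{\mu_i}|\mu_i\delta|$, Jensen/Cauchy--Schwarz against the probability vector $\mu_i$, the density control $\mu_{ij}\le(1+\varkappa)p_{ij}$ from the support property, and the stationarity identity $\sum_i q_i^\pi p_{ij}=q_j^\pi$.
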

The proof follows the proof of \cite[Lem. 1]{kose2021risk}, using the fact that $\Proj_\Rc^\pi(\cdot)$ is a nonexpansive mapping in
the norm $\|\cdot\|_{q^\pi}$.




To evaluate the policy by equation \eqref{PE-proj}, we can thus (theoretically) apply a fixed point algorithm.  We define the minimizing objective with respect to a reference point $\Bar{\theta}$:
\begin{equation}
\label{L-exact}
\begin{aligned}
L_{\Bar{\theta}}(\theta) &= \big\|F(\varPhi;\theta) - c^\pi - \alpha\orgvec{\sigma}\big(P^\pi,F(\varPhi;\Bar{\theta})\big)\big\|_q^2 \\
&= \Eb_{i\sim q^\pi} \Big\{\big[f(\varPhi_i;\theta) - c_i^\pi - \alpha \sigma_i\big(P_i^\pi,F(\varPhi;\Bar{\theta})\big)\big]^2 \Big\},
\end{aligned}
\end{equation}
and we iterate
\begin{equation}
\label{value-iter}
\theta^{\ell+1} = \argmin L_{\theta^\ell}(\theta), \quad \ell=1,2,\dots.
\end{equation}
\begin{theorem}
\label{t:value-iter-conv}
If the set $\Rc$ is convex and closed and $\alpha\sqrt{1+\varkappa} < 1$, then the sequence of functions $\tilde{v}^\ell= F(\varPhi;\theta^\ell)$, $\ell=1,2\dots$,
is convergent linearly  to the unique solution of the equation \eqref{PE-proj}.
\end{theorem}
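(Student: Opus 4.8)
The plan is to recognize the iteration \eqref{value-iter} as a disguised fixed-point iteration for the operator $D^\pi$ and then invoke Banach's contraction principle, using Lemma~\ref{l:ne} to supply the contraction modulus $\alpha\sqrt{1+\varkappa}<1$. The first step is to identify the minimizer of $L_{\bar\theta}$ in value-function terms: since $f(\varPhi_i;\theta)=F_i(\varPhi;\theta)$ ranges over the convex closed set $\Rc$ as $\theta$ varies, and $L_{\bar\theta}(\theta)=\|F(\varPhi;\theta)-(c^\pi+\alpha\vec\sigma(P^\pi,F(\varPhi;\bar\theta)))\|_q^2$, the value function $F(\varPhi;\theta^{\ell+1})$ realizing the minimum in \eqref{value-iter} is exactly the $q^\pi$-projection onto $\Rc$ of $c^\pi+\alpha\vec\sigma(P^\pi,F(\varPhi;\theta^\ell))$; that is, $\tilde v^{\ell+1}=D^\pi(\tilde v^\ell)$ in the notation of \eqref{d_op}. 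One should note that the projection, hence $\tilde v^{\ell+1}$ as an element of $\Rc$, is unique (strict convexity of the squared norm on the convex closed set $\Rc$), even if $\theta^{\ell+1}$ itself need not be unique when $F(\varPhi;\cdot)$ is not injective.

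With that identification in hand, the core of the argument is immediate. By Lemma~\ref{l:ne}, $D^\pi$ is Lipschitz in $\|\cdot\|_{q^\pi}$ with modulus $\alpha\sqrt{1+\varkappa}$, which is strictly less than $1$ by hypothesis, so $D^\pi$ is a contraction on the Banach space $(\Vc,\|\cdot\|_{q^\pi})$ — here one uses that all norms on $\Rb^n$ are equivalent and $q^\pi$ has full support (the restarted chain is a unichain, so $q_i^\pi>0$ for all $i$), so $\|\cdot\|_{q^\pi}$ is genuinely a norm and $\Vc$ is complete. Banach's fixed-point theorem then gives a unique fixed point $\tilde v^\pi\in\Vc$ of $D^\pi$, which is by definition the unique solution of \eqref{PE-proj}, and the iterates $\tilde v^{\ell+1}=D^\pi(\tilde v^\ell)$ converge to it with $\|\tilde v^{\ell}-\tilde v^\pi\|_{q^\pi}\le (\alpha\sqrt{1+\varkappa})^{\ell-1}\|\tilde v^1-\tilde v^\pi\|_{q^\pi}$, i.e. linearly. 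A small point to address is that the fixed point lies in $\Rc$: since $D^\pi$ maps into $\Rc$ (it is a projection onto $\Rc$), any fixed point automatically lies in $\Rc$, so \eqref{PE-proj} is consistent with $\tilde v^\pi=F(\varPhi;\theta^\pi)$ for some $\theta^\pi$.

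The only genuine subtlety — and the step I would be most careful with — is the passage from "$\theta^{\ell+1}$ is an argmin of $L_{\theta^\ell}$" to "$\tilde v^{\ell+1}=D^\pi(\tilde v^\ell)$," because the learning model $F(\varPhi;\cdot)$ may be many-to-one (and, in the nonlinear case, $\Rc=\range(F(\varPhi;\cdot))$ is only closed and convex by explicit assumption). The resolution is that the statement and conclusion are about the \emph{functions} $\tilde v^\ell=F(\varPhi;\theta^\ell)$, not about the parameters: the minimum value of $L_{\theta^\ell}$ is attained at a point of $\Rc$ that is unique, namely $\Proj_\Rc^\pi(c^\pi+\alpha\vec\sigma(P^\pi,\tilde v^\ell))$, so every minimizing $\theta^{\ell+1}$ yields the same $\tilde v^{\ell+1}$, and the recursion on the value-function level is well-defined regardless of the multiplicity of parameters. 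Everything else — completeness, the contraction estimate, the geometric rate — is routine once Lemma~\ref{l:ne} is granted.
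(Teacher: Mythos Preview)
Your proposal is correct and follows essentially the same approach as the paper: identify the iteration \eqref{value-iter} in value-function space as $\tilde v^{\ell+1}=D^\pi(\tilde v^\ell)$ and then invoke the contraction estimate of Lemma~\ref{l:ne} together with Banach's fixed-point theorem. You have simply spelled out more carefully the identification of the argmin with the projection and the well-definedness of $\tilde v^{\ell+1}$ when $\theta^{\ell+1}$ is non-unique, points the paper leaves implicit.
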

\begin{proof}
In the function space $\Vc$, the algorithm \eqref{value-iter} has the form $\tilde{v}^{\ell+1} = D^\pi(\tilde{v}^\ell)$, $\ell=1,2,\dots$. Its
linear convergence follows from the contraction property of the operator $D^\pi$ established in Lemma~\ref{l:ne}.
\end{proof}

 Evidently, if $F(\varPhi\,;\,\cdot\,)$ is an injection, then the sequence $\{\theta^\ell\}_{\ell\ge 1}$ is convergent as well.

 However, the practical implementation of the abstract scheme \eqref{value-iter} is extremely difficult. The objective function \eqref{L-exact}
 is the expected value with respect to the stationary distribution $q^\pi$. Furthermore, the values of the stage-wise transition risk mapping
 ${\sigma}\big(P_i^\pi,F(\varPhi;\Bar{\theta})\big)$ are not observed,
 and their estimation would require extensive sampling from the conditional distribution. All these difficulties make the scheme \eqref{value-iter}
 difficult to implement in its general form.

 \subsection{The Risk-Averse Method of Temporal Differences}

 Another possibility, well-developed in the expected value case, is the application of the \emph{method of temporal differences}
\cite{sutton1988learning}. It uses differences between the values of the approximate value function at successive states to improve the approximation, concurrently with the evolution of the system.
The methods of temporal differences have been proven to converge in the mean (to some limit) in \citep{Dayan1992} and almost surely by several studies,
with different degrees of generality and precision
\citep{Peng1993,Dayan1994,Tsitsiklis1994, Jaakkola1994}. Almost sure convergence of the stochastic
method with linear function approximation to a solution
of a projected dynamic programming equation was proved by \cite{tsitsiklis1997analysis}.

 The risk-averse method of temporal differences, due to \cite{kose2021risk}, is based on two
 additional assumptions:
 \begin{tightitemize}
 \item The feature model is linear, that is, it has the form \eqref{feature-linear};
 \item Unbiased statistical estimates $\widetilde{\sigma}_i\big(P_i^\pi,\varPhi\theta\big)$ of the transition risk mappings are available, satisfying
 \begin{equation}
 \label{sigma-tilde}
 \Eb\big[ \widetilde{\sigma}_i\big(P_i^\pi,\varPhi\theta\big) \,\big|\, i \big]  = {\sigma}_i\big(P_i^\pi,\varPhi\theta\big).
  \end{equation}
 \end{tightitemize}
 If we simulate a trajectory $\{i_t\}_{t\ge 0}$ of the system under the policy $\pi$, at each time step $t$, having some estimates $\theta_t$ of the
 model coefficients, we can define  the \emph{risk-averse temporal differences}:
\begin{equation}
\label{hat-dt}
d_t = \varPhi_{i_t} \theta_t - c^\pi_{i_t} - \alpha \sigma_{i_t} (P_{i_t}^\pi,\varPhi \theta_t), \quad t=0,1,2,\dots.
\end{equation}
These are the quantities under the square on the right-hand side of \eqref{L-exact} at $\theta=\bar{\theta} = \theta_t$.

The temporal differences cannot be easily computed or observed; this would require the evaluation of the risk $\sigma_{i_t} (P_{i_t}^\pi,\varPhi \theta_t)$ and thus consideration of \emph{all}
possible transitions from state $i_t$. Instead, we use their random estimates $\widetilde{\sigma}_{i_t}(P_{i_t}^\pi,\varPhi \theta_t)$, and define the observed risk-averse temporal differences,
\begin{equation}
\label{TD0-1}
\widetilde{d}_t = \varPhi_{i_t} \theta_t - c^\pi_{i_t} - \alpha \widetilde{\sigma}_{i_t} (P_{i_t}^\pi,\varPhi \theta_t), \quad t=0,1,2,\dots.
\end{equation}
This allows us to construct the {risk-averse temporal difference  method} as follows:
\begin{equation}
\label{TD0-2}
\theta_{t+1} =  \theta_t - \gamma_t \varPhi_{i_t}^\top\, \widetilde{d}_t, \quad t=0,1,2,\dots,
\end{equation}
 with appropriately chosen stepsizes $\gamma_t>0$ decreasing to 0. It has been proved in \cite{kose2021risk} that under the assumptions of Theorem \ref{t:value-iter-conv}
 and standard conditions on the sequence of stepsizes $\{\gamma_t\}_{t\ge 0}$, the sequence of functions $\{\varPhi \theta_t\}_{t \ge 0}$ is convergent a.s. to the
 solution of the projected policy evaluation equation \eqref{PE-proj}.

 The mini-batch transition risk mappings \eqref{mini-batch} provide a convenient tool to construct the statistical estimates $\widetilde{\sigma}_i\big(P_i^\pi,\varPhi\theta\big)$.
 Suppose that the original transition risk mapping is defined as \eqref{mini-batch}, with some base measure of risk $\hat{\sigma}$:
 \[
 \sigma_i(P_i^\pi ,\varPhi \theta) = \Eb_{j^{1:N}\sim (P_i^\pi )^N}\big\{ \hat{\sigma}_i\big(P^{(N)}(j^{1:N}), \varPhi \theta \big)\big\}.
 \]
  As it is defined as an expected value, the estimator
 \[
 \widetilde{\sigma}_{i_t} (P_{i_t}^\pi,\varPhi \theta_t) \triangleq \hat{\sigma}_i\big(P^{(N)}(j^{1:N}), \varPhi \theta \big)
\]
is an unbiased estimator of the transition risk which can be used in the method of temporal differences.
Its calculation involves a small number $N$ is simulations of transitions from the state $i$, and the evaluation of a
simple finite-sample risk $\hat{\sigma}_i\big(P^{(N)}(j^{1:N}), \varPhi \theta \big)$ at the simulated successor states $j^1,\dots j^N$.

 Consider the application of the mini-batch transition risk mapping \eqref{batch-sup} with $N=2$ for the risk evaluation. For the convergence of
the value iteration method \eqref{value-iter} or the method of temporal differences \eqref{TD0-2}, it is essential to estimate its distortion coefficient.
\begin{lemma}
The distortion coefficient of the  mini-batch transition risk mapping \eqref{batch-sup} with $N=2$ satisfies the inequality
\begin{equation}
\label{kappa-max}
\varkappa  \le  \max\big\{ 1  - p_{ij} : p_{ij}>0\big\}<1.
\end{equation}
\end{lemma}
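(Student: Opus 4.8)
The plan is to compute the dual set $A_i(P_i)$ for the mini-batch worst-case mapping $\sigma_i^{(N)}$ with $N=2$ explicitly, and then read off the distortion coefficient from its definition in Lemma~\ref{l:ne}. First I would expand \eqref{batch-sup} for $N=2$: if $j^1,j^2$ are i.i.d.\ with distribution $P_i=(p_{ij})_{j\in\Xc}$, then
\[
\sigma_i^{(2)}(P_i,v) = \Eb\big[\max(v(j^1),v(j^2))\big] = \sum_{j,k} p_{ij}p_{ik}\max(v(j),v(k)).
\]
The next step is to recognize this as a coherent risk mapping and identify its representing measure. Since $\max(v(j),v(k)) = \tfrac12 v(j) + \tfrac12 v(k) + \tfrac12|v(j)-v(k)|$ is affine in $v$ on each region $\{v(j)\ge v(k)\}$, for a \emph{fixed} $v$ there is a (sub)gradient measure $\mu_i\in\Pc(\Xc)$ attaining the max in $\sigma_i^{(2)}(P_i,v)=\max_{\mu_i\in A_i(P_i)}\mu_i v$; writing out $\partial_v$ of the sum above, the coefficient of $v(j)$ is
\[
\mu_{ij} = p_{ij}^2 + 2\,p_{ij}\!\!\sum_{k:\,v(k)<v(j)}\!\! p_{ik} \;+\; (\text{a convex combination of }p_{ij}^2 \text{ and } 0 \text{ from ties}),
\]
which one checks sums to $1$. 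The core of the argument is then to bound $|\mu_{ij}-p_{ij}|/p_{ij}$ over all such $\mu_i$ and all $i,j$ with $p_{ij}>0$.

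For the bound, note $\mu_{ij}/p_{ij} = p_{ij} + 2\sum_{k:\,v(k)<v(j)} p_{ik} + (\text{tie term})$, and the sum $p_{ij} + \sum_{k\ne j:\,v(k)\le v(j)} p_{ik}$ plus possibly more mass is at most the total mass $\le 1$; being slightly careful with the tie term, the largest possible value of $\mu_{ij}/p_{ij}$ is attained when $v(j)$ is the maximum, giving $\mu_{ij}=p_{ij}^2 + 2p_{ij}(1-p_{ij})\cdot(\text{something})$. Concretely, when $v(j)$ strictly dominates all other reachable states, $\mu_{ij}=p_{ij}^2+2p_{ij}\sum_{k\ne j}p_{ik}=p_{ij}^2+2p_{ij}(1-p_{ij})=p_{ij}(2-p_{ij})$, so $\mu_{ij}/p_{ij}-1 = 1-p_{ij}$. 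In the other direction, $\mu_{ij}\ge p_{ij}^2>0$, so $1-\mu_{ij}/p_{ij}\le 1-p_{ij}$ as well. Hence $|\mu_{ij}-p_{ij}|/p_{ij}\le 1-p_{ij}$ in every case, and taking the max over $i,j$ with $p_{ij}>0$ yields \eqref{kappa-max}; strict inequality $<1$ holds because $p_{ij}>0$ forces $1-p_{ij}<1$.

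The main obstacle I anticipate is the careful handling of ties (states $k$ with $v(k)=v(j)$): there the subdifferential of $\max$ is set-valued, so $A_i(P_i)$ contains a whole face rather than a single point, and one must verify that the extreme points of this face still obey the bound — i.e., that pushing all the tie-mass onto $\mu_{ij}$ cannot exceed $p_{ij}(2-p_{ij})$. This is where one should argue that for \emph{any} $\mu_i\in A_i(P_i)$ the quantity $\mu_{ij}$ is a convex combination of expressions of the form $p_{ij}^2 + 2p_{ij}(\text{partial sum of other }p_{ik}\text{'s})$, each bounded above by $p_{ij}(2-p_{ij})$ and below by $p_{ij}^2$, so the convex combination lies in $[p_{ij}^2, p_{ij}(2-p_{ij})]$; the claimed bound on $|\mu_{ij}-p_{ij}|/p_{ij}$ then follows uniformly. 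Everything else is bookkeeping with the identity $\sum_k p_{ik}=1$.
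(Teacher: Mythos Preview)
Your proposal is correct and is essentially the same argument as the paper's. The paper orders the reachable states by the values $v_1<\dots<v_k$, reads off the representing measure from the identity $P[\max(v(j^1),v(j^2))\le b]=(P[v(j)\le b])^2$ to get $\mu_{ij}=\big(\sum_{s\le j}p_{is}\big)^2-\big(\sum_{s<j}p_{is}\big)^2=p_{ij}^2+2p_{ij}\sum_{s<j}p_{is}$, and derives the same two-sided bound $p_{ij}^2\le\mu_{ij}\le 2p_{ij}-p_{ij}^2$; ties are handled, as you do, by noting that the subdifferential at a nondifferentiable $v$ is the convex hull of nearby gradients, so the bound persists.
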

\begin{proof}
Suppose, for simplicity, that the states $j$ such that $p_{ij}>0$ are $\{1,2,\dots,k\}$ and they are ordered in such a way that
$v_1 < v_2 < \dots < v_k$. At such a point $v$, the risk measure ${\sigma}(P^{(2)}(j^{1:2}),\cdot)$ is locally linear.
By construction, the distribution function of ${\sigma}(P^{(2)}(j^{1:2}),v)$ has the form
\[
P_{j^{1:2}\sim P_i^2}\big[ \max_{1 \le m \le 2} v(j^m) \le b \big]  = \Big( P_{j\sim P_i}\big[ v(j) \le b \big]\Big)^2, \quad b\in \Rb.
\]
It follows that the jumps of the distribution function have the sizes
\[
\mu_{ij} = \bigg(\sum_{s=1}^j p_{is}\bigg)^2  - \bigg(\sum_{s=1}^{j-1} p_{is}\bigg)^2 = 2 p_{ij}\sum_{s=1}^{{j}-1} p_{is} + p_{ij}^2.
\]
Therefore $p_{ij}^2 \le \mu_{ij} \le 2 p_{ij} - p_{ij}^2$. Subtracting $p_j$ and dividing by it,  we obtain the estimate \eqref{kappa-max}.
The validity of the bound at the points of nondifferentiability of ${\sigma}(P^{(2)}(j^{1:2}),\cdot)$ follows from the fact that the subdifferential
at any $v$ is the convex hull of limits of gradients at points $v'$ of differentiability, convergent to $v$.
\end{proof}

If we combine the mini-batch risk measure $\sigma^{(2)}(P_i,v)$ with the expected value, with coefficients $c$ and $1-c$, respectively,
we shall obtain $\varkappa \le c$  (recall that $c\in [0,1]$). Therefore, we can easily satisfy the contraction mapping condition
of Theorem \ref{t:value-iter-conv} by
choosing $c$ such that $\alpha \sqrt{1+c} <1$. Similar calculations can be carried out for $N>2$.

\subsection{Multi-Episodic Least-Squares Policy Evaluation}
\label{s:sample-eval}

The policy evaluation problem in \eqref{value-iter} for the linear architecture model \eqref{feature-linear} has the form:
\begin{equation}
\label{value-iter-linear}
\min_{\theta}\,{L}(\theta) \eqdef \Eb
\big[ \varPhi_{i} \theta  - c(i) - \alpha {\sigma}_{i}\big(P_{i},\varPhi{\theta^{\ell-1}}\big)\big]^2;
\end{equation}
the expectation is with respect to the stationary distribution $q^\pi$ of the state $i$. The next parameter value $\theta^{\ell+1}$ is the solution of \eqref{value-iter-linear}.

As we have already mentioned, three difficulties are associated with the problem \eqref{value-iter-linear}:
it involves the enumeration of all states, the stationary distribution  $q^\pi$ is not known, and the risk mappings $\sigma_i(P_i,\tilde{v})$ cannot be evaluated exactly. To address the first two difficulties, we sample many trajectories of the system operating under the policy $\pi$ (episodes), and we use the empirical distributions as approximations of the stationary distribution. The last difficulty is dealt with by using a mini-batch transition risk mapping
$\sigma_i^{(N)}(P_i,\tilde{v})$ for $\sigma_i(P_i,\tilde{v})$ and applying its unbiased estimates $\widetilde{\sigma}_i^{(N)}(P_i,\tilde{v})={\sigma}_i\big(P^{(N)}(j^{1:N}),\tilde{v}\big)$ at the states
encountered in the episode simulation.

Below, we outline the method with one episode per iteration, but identical
considerations apply to the multi-episodic setting.
In the iteration $\ell$ of the method, for a fixed policy $\pi$, we simulate an episode  with depth $T_\ell$; it begins with a reset state $s^{\ell}_{0}$ (sampled from some distribution $P_0$), and ends with a state $s^{\ell}_{T_\ell}$. We denote the state at time $t$ of episode $\ell$ by $s^\ell_t$. At each such state, we simulate $N$ possible successor states ${j}_{\ell,t+1}^{1:N}$ from the transition probability  $P_{s^{\ell}_{t}}$ (we write $P$ instead of $P^\pi$ for brevity).
They define the empirical measure $P^{(N)}({j}_{\ell,t+1}^{1:N})$. A randomly selected successor state from among them becomes the next trajectory state $s^{\ell}_{t+1}$;  thus, $s^{\ell}_{t+1}|s^{\ell}_{t} \sim P_{s^{\ell}_{t}}$. All sampled successors are used to calculate a random estimate of the transition risk,
\[
\widetilde{\sigma}_{s^{\ell}_{t}}^{(N)}(P_{s^{\ell}_{t}},\varPhi{\theta^{\ell-1}}) = {\sigma}_{s^{\ell}_{t}}\big(P^{(N)}({j}_{\ell,t+1}^{1:N}),\varPhi{\theta^{\ell-1}}\big).
\]
By construction,
 \begin{equation}
 \label{martingale-error}
\Eb\big[  \widetilde{\sigma}_{s^{\ell}_{t}}^{(N)}(P_{s^{\ell}_{t}},\varPhi{\theta^{\ell-1}})\,\big|\,\Fc_t\big] = {\sigma}_{s^{\ell}_{t}}^{(N)}(P_{s^{\ell}_{t}},\varPhi{\theta^{\ell-1}}),
 \end{equation}
  with $\Fc_t$ denoting the $\sigma$-algebra of all events
observed until the state $s_t^\ell$ was reached.

As an approximation of problem \eqref{value-iter-linear}, we consider the regularized sample-based Bellman error minimization:
\begin{equation}
\label{minTD_sim_target}
\min_{\theta}\,\widetilde{L}_T(\theta) \eqdef \frac{1}{T}  \sum_{t=0}^{T-1}
\Big[ \varPhi_{s^{\ell}_{t}} \theta  - c^\pi(s^{\ell}_{t}) - \alpha \widetilde{\sigma}^{(N)}_{s^{\ell}_{t}}\big(P_{s^{\ell}_{t}},\varPhi{\theta^{\ell-1}}\big)\Big]^2 + \lambda\|\theta\|^2,
\end{equation}
where $\lambda>0$ is a small regularization parameter, and $T=T_\ell$. The solution of problem \eqref{minTD_sim_target} is denoted by $\widetilde{\theta}_T^\ell$.

An essential feature of problem \eqref{minTD_sim_target} is that all its ingredients are readily available, once the episode is simulated.
For theoretical purposes, together with \eqref{minTD_sim_target}, we consider an idealized problem, with the exact values of the transition risk mappings:
\begin{equation}
\label{minTD_sim_target-ideal}
\min_{\theta}\,{L}_T(\theta) \eqdef \frac{1}{T}  \sum_{t=0}^{T-1}
\Big[ \varPhi_{s^{\ell}_{t}} \theta  - c^\pi(s^{\ell}_{t}) - \alpha {\sigma}^{(N)}_{s^{\ell}_{t}}\big(P_{s^{\ell}_{t}},\varPhi{\theta^{\ell-1}}\big)\Big]^2 + \lambda\|\theta\|^2,
\end{equation}
Its solution is denoted by $\hat{\theta}_T^\ell$.
Evidently, problem \eqref{minTD_sim_target-ideal} cannot be solved, because we do not observe the risk ${\sigma}^{(N)}_{s^{\ell}_{t}}\big(P_{s^{\ell}_{t}},\varPhi{\theta^{\ell-1}}\big)$, but it is a close approximation of the
problem \eqref{value-iter-linear} of the abstract scheme.

At first, we analyze the difference between the solutions $\widetilde{\theta}_T^\ell$ and $\hat{\theta}_T^\ell$ of problems \eqref{minTD_sim_target} and \eqref{minTD_sim_target-ideal}, respectively.
To simplify the presentation and the analysis, denote
\begin{gather*}
\varphi_t \triangleq \big(\varPhi_{s^{\ell}_{t}}\big)^\top,\\
c_t \triangleq  c^\pi(s^{\ell}_{t}),\\
\widetilde{\sigma}_t \triangleq \widetilde{\sigma}^{(N)}_{s^{\ell}_{t}}\big(P_{s^{\ell}_{t}},\varPhi{\theta^{\ell-1}}\big).
\end{gather*}
By straightforward linear algebra,
\begin{gather}
\widetilde{\theta}_T^\ell = \varLambda_T^{-1}\sum_{t=0}^{T-1}\varphi_t \big[ c_{t} + \alpha \widetilde{\sigma}_t\big], \label{thetaT}\\
\intertext{with}
\varLambda_T = \lambda\Ib + \sum_{t=0}^{T-1} \varphi_t \varphi_t^\top . \label{LambdaT}
\end{gather}
In fact, the formulas \eqref{thetaT}--\eqref{LambdaT} can be applied iteratively in the course of the simulation, for $t=1,\dots,T_\ell$, by employing the Sherman--Morrison formula
for the update of the inverse of a matrix after a rank-one modification:
\[
\varLambda_{t+1}^{-1} = \varLambda_{t}^{-1} + \frac{ \| \varLambda_{t}^{-1}\varphi_t\|^2}{1+ \varphi_t^\top \varLambda_{t}^{-1} \varphi_t },
\]
with $\varLambda_0 = \lambda\Ib$.

The exact least-squares solution of \eqref{minTD_sim_target-ideal}, if we could see the risk, would be
\begin{equation}
\label{exact-theta}
\hat{\theta}_T^\ell = \varLambda_T^{-1}\sum_{t=0}^{T-1}\varphi_t\big[ c_{t} + \alpha {\sigma}_t\big].
\end{equation}
We have the following uniform large deviation bound.
\begin{theorem}
\label{t:LD-error}
A constant $R>0$ exists such that for every $\delta>0$ with probability at least $1-\delta$, for all $T \ge 1$,
\begin{equation}
\label{LD-error}
\big\langle \widetilde{\theta}_t^\ell  - \hat{\theta}_t^\ell, \varLambda_t (\widetilde{\theta}_t^\ell  - \hat{\theta}_t^\ell) \big\rangle \le 2 \alpha^2 R^2 \ln \Big( \frac{ \big(\det(\varLambda_t)\big)^{1/2}}{\delta \lambda^{M/2}}\Big).
\end{equation}
\end{theorem}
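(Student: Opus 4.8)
The plan is to reduce \eqref{LD-error} to a self‑normalized tail inequality for a vector‑valued martingale. Subtracting \eqref{exact-theta} from \eqref{thetaT} (both share the matrix $\varLambda_t$ and the terms $\varphi_s$, $c_s$), one obtains
\[
\widetilde{\theta}_t^\ell - \hat{\theta}_t^\ell
= \alpha\,\varLambda_t^{-1}\sum_{s=0}^{t-1}\varphi_s\,\varepsilon_s,
\qquad
\varepsilon_s \triangleq \widetilde{\sigma}_s - {\sigma}_s,
\]
where ${\sigma}_s = {\sigma}^{(N)}_{s^{\ell}_{s}}\big(P_{s^{\ell}_{s}},\varPhi{\theta^{\ell-1}}\big)$ is the exact mini‑batch risk appearing in \eqref{minTD_sim_target-ideal}. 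Setting $S_t = \sum_{s=0}^{t-1}\varphi_s\varepsilon_s$ and using the symmetry of $\varLambda_t^{-1}$, the left‑hand side of \eqref{LD-error} equals
\[
\big\langle \widetilde{\theta}_t^\ell - \hat{\theta}_t^\ell,\ \varLambda_t(\widetilde{\theta}_t^\ell - \hat{\theta}_t^\ell)\big\rangle
= \alpha^2\big\langle S_t,\ \varLambda_t^{-1}S_t\big\rangle,
\]
so it suffices to show that, with probability at least $1-\delta$ and simultaneously for all $t\ge 1$, the scalar $\big\langle S_t, \varLambda_t^{-1}S_t\big\rangle$ is bounded by $2R^2\ln\big((\det\varLambda_t)^{1/2}\,\delta^{-1}\lambda^{-M/2}\big)$.

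Next I would verify the martingale structure of the noise $\{\varepsilon_s\}$. The feature vector $\varphi_s$ and the exact risk ${\sigma}_s$ depend only on the visited state $s^{\ell}_s$ and the fixed parameter $\theta^{\ell-1}$, hence are $\Fc_s$‑measurable, whereas $\widetilde{\sigma}_s = {\sigma}_{s^{\ell}_s}\big(P^{(N)}(j^{1:N}_{\ell,s+1}),\varPhi\theta^{\ell-1}\big)$ is a function of the freshly drawn mini‑batch; identity \eqref{martingale-error} then gives $\Eb[\widetilde{\sigma}_s\mid\Fc_s]={\sigma}_s$, i.e. $\Eb[\varepsilon_s\mid\Fc_s]=0$. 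Thus $\varphi_s$ is predictable and $\varepsilon_s$ a martingale difference for the filtration $\{\Fc_s\}$, and $\{S_t\}$ is a vector‑valued martingale. Moreover, by monotonicity and translation equivariance of a coherent transition risk mapping (with the support property restricting attention to reachable states), both $\widetilde{\sigma}_s$ and ${\sigma}_s$ lie between $\min_i (\varPhi\theta^{\ell-1})_i$ and $\max_i (\varPhi\theta^{\ell-1})_i$, so $|\varepsilon_s|\le R$ for a constant $R$ depending only on the feature matrix and the bounded range of the iterates; by Hoeffding's lemma each $\varepsilon_s$ is therefore conditionally $R$‑sub‑Gaussian given $\Fc_s$.

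Finally I would invoke the self‑normalized tail bound for vector‑valued martingales (Abbasi‑Yadkori, P\'al and Szepesv\'ari, 2011 — the ``method of mixtures'' inequality), applied with regularizer $V=\lambda\Ib$, which states that with probability at least $1-\delta$, for all $t\ge1$,
\[
\big\langle S_t, \varLambda_t^{-1}S_t\big\rangle
\le 2R^2\ln\!\Big(\tfrac{(\det\varLambda_t)^{1/2}(\det V)^{-1/2}}{\delta}\Big)
= 2R^2\ln\!\Big(\tfrac{(\det\varLambda_t)^{1/2}}{\delta\,\lambda^{M/2}}\Big),
\]
since $\det V=\lambda^{M}$ for the $M\times M$ identity scaled by $\lambda$. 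Multiplying through by $\alpha^2$ yields \eqref{LD-error}. The linear‑algebra reduction and the determinant bookkeeping are routine; the two points that require care are (i) confirming that $\{\varepsilon_s\}$ is genuinely an $\Fc_s$‑martingale difference even though the next trajectory state $s^{\ell}_{s+1}$ is sampled from the very mini‑batch that defines $\widetilde{\sigma}_s$ — which is admissible because we condition only on $\Fc_s$, the information available \emph{before} that draw — and (ii) the uniform (in $s$ and $\ell$) bound $R$ on the noise; the uniform‑in‑$t$ validity of the estimate is exactly what the stopping‑time/mixture argument behind the cited inequality provides, so no separate union bound over $T$ is needed.
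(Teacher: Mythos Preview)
Your proposal is correct and follows essentially the same route as the paper: subtract \eqref{exact-theta} from \eqref{thetaT} to express the error as $\alpha\varLambda_t^{-1}S_t$ with $S_t=\sum_{s<t}\varphi_s(\widetilde{\sigma}_s-\sigma_s)$, check that the noise is a bounded (hence conditionally sub-Gaussian) martingale difference via \eqref{martingale-error} and finiteness of the state space, and then apply the self-normalized concentration inequality of Abbasi-Yadkori, P\'al and Szepesv\'ari with $V=\lambda\Ib$. Your additional care in justifying predictability of $\varphi_s$ and the uniform bound on $|\varepsilon_s|$ via the coherence axioms is more explicit than the paper's terse ``due to the finiteness of the state space,'' but the argument is the same.
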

\begin{proof}
The error between \eqref{thetaT} and \eqref{exact-theta} is given by the martingale:
\[
\widetilde{\theta}_T^\ell  - \hat{\theta}_T^\ell = \alpha \varLambda_T^{-1}\sum_{t=0}^{T-1}\varphi_t \eta_t,
\]
where $\eta_t = \widetilde{\sigma}_t - {\sigma}_t$. By \eqref{martingale-error},  $\Eb\big[\eta_t|\Fc_t] = 0$. Also, due to the finiteness of the state space,
the process $\{\eta_t\}_{t\ge 0}$ is conditionally sub-Gaussian:  a constant $R \ge 0$ exists, such that for all $t$,
\[
\Eb \big[ \textup{e}^{\lambda \eta_t} \big| \Fc_t\big] \le \textup{e}^{\lambda^2R^2/2}, \quad \forall\,\lambda \in \Rb.
\]
Define
\[
S_t = \sum_{\tau=0}^{t-1}\varphi_\tau \eta_\tau.
\]
By a concentration inequality for self-scaled martingales \cite[Thm. 1]{abbasi2011improved} (see also \cite[Thm. 14.7]{pena2009self}), for any $\delta>0$, with probability at least $1-\delta$, for all $t \ge 0$
\[
\big\langle  S_t, \varLambda_t^{-1} S_t \big\rangle  \le 2 R^2 \ln \Big( \frac{ \big(\det(\varLambda_t)\big)^{1/2}}{\lambda^{M/2}\delta}\Big).
\]
Our assertion then follows by simple algebra.
\end{proof}

The essential feature of the bound \eqref{LD-error} is that it holds uniformly in time. It is worth stressing that in the derivation of this result, we did not use anything except that $\Eb\big[\eta_t|\Fc_t] = 0$, and the finiteness of the state space. Therefore, the mechanism by which the state-control feature vectors $\{\varphi_t\}_{t \ge 0}$
are generated may be arbitrary, as long as they form an adapted sequence.

Now we use the fact that the observations are collected from a simulated path of an ergodic Markov chain. Denote by $\varphi$ the random vector of features of a state, with the
state distributed according to the stationary measure $q^\pi$.

\begin{theorem}
\label{t:Hoeffding}
Suppose the feature covariance matrix $\Eb \big[\varphi\varphi^\top\big]$ is nonsingular. Then for every $\delta>0$ and every $\varepsilon>0$
one can find $T(\delta,\varepsilon)$ such that for all $T \ge T(\delta,\varepsilon)$ with probability at least $1-\delta$ we have $\|\hat{\theta}_T^\ell - \theta^{\ell+1}\| \le \varepsilon$.
\end{theorem}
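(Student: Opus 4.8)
The plan is to reduce the statement to the convergence of two empirical moments along the sampled path — the empirical feature second moment and the empirical cross-moment with the target — and then to invert a (perturbed) positive definite matrix. First I would record the closed forms. Since $\varLambda_T \succeq \lambda\Ib \succ 0$, the estimate $\hat{\theta}_T^\ell$ in \eqref{exact-theta} is always well defined, and, dividing numerator and denominator by $T$,
\[
\hat{\theta}_T^\ell = \Big(\tfrac{1}{T}\varLambda_T\Big)^{-1}\,\tfrac{1}{T}\sum_{t=0}^{T-1}\varphi_t\big(c_t+\alpha\sigma_t\big),
\qquad
\tfrac{1}{T}\varLambda_T = \tfrac{\lambda}{T}\Ib + \tfrac{1}{T}\sum_{t=0}^{T-1}\varphi_t\varphi_t^\top .
\]
On the other side, since $\Eb[\varphi\varphi^\top]$ is nonsingular, the population least-squares problem that \eqref{minTD_sim_target-ideal} approximates (i.e.\ \eqref{value-iter-linear} with the operative mini-batch mapping $\sigma_i^{(N)}$) has the unique minimizer
\[
\theta^{\ell+1} = \big(\Eb[\varphi\varphi^\top]\big)^{-1}\,\Eb_{i\sim q^\pi}\big[\varphi\,\big(c^\pi(i)+\alpha\sigma_i^{(N)}(P_i,\varPhi\theta^{\ell-1})\big)\big].
\]
So it suffices to show the two empirical averages above converge (with the required probability, for all large $T$) to the corresponding expectations under $q^\pi$, and that $\tfrac{\lambda}{T}\Ib\to 0$.

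Next I would invoke ergodicity. The quantities $\varphi_t=\varPhi_{s^\ell_t}^\top$, $c_t=c^\pi(s^\ell_t)$ and $\sigma_t=\sigma^{(N)}_{s^\ell_t}(P_{s^\ell_t},\varPhi\theta^{\ell-1})$ are deterministic, bounded functions of the current state $s^\ell_t$, because the state space is finite, the costs are finite, and a coherent (hence finite-valued) risk mapping evaluated at the fixed vector $\varPhi\theta^{\ell-1}$ is bounded. The path $\{s^\ell_t\}$ is a trajectory of a finite, irreducible, aperiodic Markov chain with stationary distribution $q^\pi$ (the restarted unichain of Section~\ref{s:sample-eval}). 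By the strong law of large numbers for finite ergodic Markov chains, applied coordinate-wise to $\varphi_t\varphi_t^\top$ and to $\varphi_t(c_t+\alpha\sigma_t)$,
\[
\tfrac{1}{T}\sum_{t=0}^{T-1}\varphi_t\varphi_t^\top \xrightarrow[T\to\infty]{\textup{a.s.}} \Eb[\varphi\varphi^\top],
\qquad
\tfrac{1}{T}\sum_{t=0}^{T-1}\varphi_t(c_t+\alpha\sigma_t) \xrightarrow[T\to\infty]{\textup{a.s.}} \Eb\big[\varphi\,(c^\pi+\alpha\sigma^{(N)})\big].
\]
Equivalently — and this is where a Hoeffding/Azuma-type concentration inequality for uniformly ergodic chains makes $T(\delta,\varepsilon)$ explicit — for every $\varrho>0$ the per-$T$ probabilities of $\{\|\tfrac1T\varLambda_T-\Eb[\varphi\varphi^\top]\|>\varrho\}$ and of the analogous event for the right-hand vector decay geometrically in $T$ (a finite irreducible aperiodic chain mixes geometrically), so their tails from any $T_0$ on are summable and vanish as $T_0\to\infty$; choosing $T_0$ so that this sum is below $\delta$ yields an event of probability at least $1-\delta$ on which both approximations hold, with accuracy $\varrho$, simultaneously for all $T\ge T_0$. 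The main obstacle is exactly this point: the observations are not i.i.d.\ but form a Markov path, so the law of large numbers must be the Markov version, and passing from a per-$T$ deviation bound to the ``for all $T\ge T(\delta,\varepsilon)$'' uniform conclusion requires the summability of those geometric tails.

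Finally I would invert. On the good event above, $\tfrac1T\varLambda_T=\Eb[\varphi\varphi^\top]+O(\varrho)$ with $\Eb[\varphi\varphi^\top]\succ 0$; since matrix inversion is locally Lipschitz on the invertible matrices, choosing $\varrho$ small enough in terms of the smallest eigenvalue of $\Eb[\varphi\varphi^\top]$ and of the uniform bounds on $\varphi$, $c^\pi$ and $\sigma^{(N)}$ forces $\tfrac1T\varLambda_T$ to be invertible with $\|(\tfrac1T\varLambda_T)^{-1}-(\Eb[\varphi\varphi^\top])^{-1}\|$ small. Combining this with the closeness of the right-hand vectors, a triangle-inequality estimate of the form $\|A^{-1}b-\widetilde A^{-1}\widetilde b\|\le \|A^{-1}\|\,\|b-\widetilde b\| + \|A^{-1}-\widetilde A^{-1}\|\,\|\widetilde b\|$ gives $\|\hat{\theta}_T^\ell-\theta^{\ell+1}\|\le\varepsilon$ for all $T\ge T_0$. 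Setting $T(\delta,\varepsilon):=T_0$ for the $\varrho=\varrho(\varepsilon)$ so determined completes the proof; handling the vanishing regularizer $\tfrac\lambda T\Ib$ and the perturbed-inverse estimate is routine.
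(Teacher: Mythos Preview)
Your proposal is correct and follows essentially the same route as the paper: write the closed forms for $\hat{\theta}_T^\ell$ and $\theta^{\ell+1}$, apply a Hoeffding-type inequality for ergodic Markov chains to the two empirical averages $\tfrac1T\sum\varphi_t\varphi_t^\top$ and $\tfrac1T\sum\varphi_t(c_t+\alpha\sigma_t)$, and conclude by continuity of matrix inversion (the paper leaves this last step implicit). The only noteworthy difference is that you read the quantifier order as ``with probability $\ge 1-\delta$, simultaneously for all $T\ge T(\delta,\varepsilon)$'' and therefore sum the geometric tails, whereas the paper's proof establishes the weaker per-$T$ statement ``for each $T\ge T(\delta,\varepsilon)$, with probability $\ge 1-\delta$''; your version is strictly stronger and the summability argument you give is sound, so this is a harmless elaboration rather than a different approach.
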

\begin{proof}
The solution of the abstract problem \eqref{value-iter-linear} has the form
\begin{equation}
\notag
\theta^{\ell+1} = \Big(\Eb \big[{\varphi} \varphi^\top\big]\Big)^{-1} \Eb\big[\varphi (c + \alpha \sigma)\big],
\end{equation}
where we write $\varphi$, $c$, and $\sigma$, for the random feature of a state, cost of a state, and transition risk at a state, when the state is distributed according to the stationary measure $q^\pi$.

The formula \eqref{exact-theta} can be rewritten as follows:
\begin{equation}
\label{exact-theta-1}
\hat{\theta}_T^\ell = \Big( \frac{\lambda}{T}\Ib + \frac{1}{T} \sum_{t=0}^{T-1} \varphi_t \varphi_t^\top\Big)^{-1} \Big( \frac{1}{T}\sum_{t=0}^{T-1}\varphi_t\big[ c_{t} + \alpha {\sigma}_t\big]\Big).
\end{equation}
A comparison of the last two displayed equations indicates that the assertion of the theorem will be true, if the empirical estimates: $\frac{1}{T} \sum_{t=0}^{T-1} \varphi_t \varphi_t^\top$ and $\frac{1}{T}\sum_{t=0}^{T-1}\varphi_t\big[ c_{t} + \alpha {\sigma}_t\big]$, will be sufficiently close to the corresponding expected values, with high probability. The term $\frac{\lambda}{T}\Ib$ is negligible
for large $T$, if $\Eb \big[\varphi\varphi^\top\big]$ is nonsingular.

For an ergodic Markov chain $\{S_t\}_{t \ge 0}$,
we  have Hoeffding-type inequalities (see \cite{glynn2002hoeffding,miasojedow2014hoeffding,moulos2020hoeffding} and the references therein): for a  function $f:\Xc \to \Rb^d$,  an absolute constant $R>0$ exists such that for all $\varepsilon_f>0$ and all $T \ge 1$
\[
P \Big\{ \Big\| \frac{1}{T} \sum_{t=0}^{T-1}  f(S_t) - \Eb[ f(S)]\Big\|_\infty > \varepsilon_f \Big\} \le 2 d \textup{e}^{ - 2 T \varepsilon_f^2/ R^2}.
\]
This particular formulation follows from \cite[Thm. 1]{moulos2020hoeffding} and the Boole--Bonferroni inequality. The application of this inequality to the two empirical estimates
in \eqref{exact-theta-1}, with proper choice of  $\varepsilon_f$ for each of them, yields the assertion of the theorem.
\end{proof}

It is clear from the proof that $T(\delta,\varepsilon)$ is of the order of  $ - \ln(\delta)/\varepsilon^2$.

It follows from Theorems \ref{t:LD-error} and \ref{t:Hoeffding} that the policy evaluation method can be implemented with high accuracy if the episodes are long enough.

\subsection{Policy Iteration}

In the policy iteration scheme, we construct two sequences $\{\pi^k\}$ and $\{\theta^k\}$ for $k = 0,1,2,..$.
Given a policy $\pi^k$, we  estimate its value by applying \eqref{minTD_sim_target}
iteratively to get the optimal $\theta^k = \theta^{\pi^k}$. Then, given the current $\theta^k$, we may try to improve the policy to get the new $\pi^{k+1}$.  In the standard approach,
the next policy $\pi^{k+1}$ follows from the previous estimated value $\theta^{k}$  via  one-step lookahead optimization:
\begin{equation}
\label{lookahead}
 \pi^{k+1}(s) = \argmin_{u\in U(s)} \{c(s,u) + \alpha \sigma_s^{(N)}(P_s(u), F(\varPhi;\theta^{k}))\}, \quad\forall s \in \X.
\end{equation}
Two practical difficulties are associated with this approach. First, $F(\varPhi,\theta^k)$ is only an approximation of the policy value $v^{\pi^k}$. Secondly, we cannot observe the accurate values of
the transition risk $\sigma_s^{(N)}(P_s(u), F(\varPhi;\theta^{k}))$; we only have access
to their statistical estimates$\widetilde{\sigma}_s^{(N)}(P_s(u), F(\varPhi;\theta^{k}))$.
Because of that, the optimization in \eqref{lookahead} may lead to inconsistent policies, resulting from
the exploitation of the inaccuracies of the model. An illustration of that is the application to be discussed in the next section, where the one-step lookahead policy turns out to be inferior to the previous one in some states.

For these reasons, we focus on a set of consistent \emph{structured policies} $\varPi(\gamma)$ controlled by a vector of parameters $\gamma \in \varGamma$.  In the simplest policy iteration scheme, our next policy $\pi^{n+1}$ follows from the previous $\theta^{k}$  via  one-step lookahead optimization over the parameter $\gamma$ for a \textit{test set} of states
$\widetilde{\Xc}$:
\begin{equation}
\label{policy-impr}
\begin{aligned}
\gamma^{k+1} &= \argmin_{\gamma \in \varGamma} \frac{1}{|\widetilde{\Xc}|}\sum_{s\in \widetilde{\Xc}}
\Big\{c\big(s,\varPi(\gamma;s)\big)
+ \alpha \widetilde{\sigma}_s^{(N)}\big(P_s(\varPi(\gamma;s)), F(\varPhi;\theta^{k})\big)\Big\}, \\
\pi^{k+1} &= \varPi(\gamma^{{k+1}}).
\end{aligned}
\end{equation}
This approach, unfortunately, is still not sufficiently stable, because the estimates  $\widetilde{\sigma}_s^{(N)}\big(P_s(\varPi(\gamma;s)),  F(\varPhi;\theta^{k})\big)$
are calculated on the basis of the value function approximations $f(\varPhi_j;\theta^{k})$ at states $j$ close to $s$, and thus close to each other; local
variations of the approximation affect the optimization model.
A multi-step lookahead policy improvement operator
is less sensitive to errors in the approximations; we describe a particular version of such a method in the next section.
We recursively run the scheme until no improvement is observed.
If the optimal policy belongs to the policy set, and if our feature-based approximations are perfect, this scheme will find the optimal policy; otherwise, a suboptimal heuristic policy will be identified.%







\section{A Robot Navigation Problem}
\label{s:robot}

\subsection{Description}

We consider the mini-batch transition risk measure and value learning in the setting of an underwater robot navigation problem. In this problem, a robot is tasked to move within a fixed connected area  $\Ac$ to visit a number of waypoints $\mathcal{W} \subset \Ac$ and to report the information $\mathcal{I} \subset \Rb_+$ collected, which can only be done at the transmission points
$\mathcal{T}\subset \Ac$. The area $\Ac$ is represented as a subset of a rectangular grid with several obstacles, but our methodology applies to other settings as well. Our goal is to control the robot in a way that minimizes the dynamic risk of the losses minus the reward for the success.

There are three kinds of actions/controls (except terminating): move the robot ($U$), collect information ($C$), and transmit information ($T$), $\Uc = U \cup C \cup T$.
The moving control set $U$ contains 8 basic directions on the grid: ${\{-1,0,1\}}^2 \setminus {\{0\}}^2$; at each location, the feasible moves are a subset of it (because of the obstacles).
Observation of a waypoint ($C$) is possible if the distance is sufficiently small, and the cost of observation is non-decreasing with respect to the distance.
The observation result is a binary random variable with either high-value information collected with probability $p$ or low-value information otherwise.  The robot will hold the information, with an equivalent reward to be received after it reaches a transmission point and reports it ($T$).

The robot may be destroyed with probability $1-\alpha$ at each step, resulting in an additional loss depending on the information collected and not yet transmitted. The additional loss function is non-decreasing with respect to the information. Since this implied discount occurs only when moving, we can simply define a control-dependent discount.

In our experiment, the state consists of 4 parts: the locations of the waypoints and the transmission points, the location of the robot, the binary variables indicating which waypoints have not been visited yet, and the information value currently carried by the robot.
That is, $\Xc = \Ac^{|\mathcal{W}| + |\mathcal{T}|} \times \Ac \times \{0,1\}^{|\mathcal{W}|} \times { \mathcal{I}}$.

The robot will not stop until it has visited all waypoints and transmitted all information, unless destroyed before. In other words, it only terminates at one of the transmission locations with all waypoints visited and zero information on hand. We define the terminating state space $\X_T = \Ac^{|\mathcal{W}| + |\mathcal{T}|} \times \mathcal{T} \times{\{0\}}^{|\mathcal{W}|} \times \{0\}$.


%
As mentioned before, we evaluate a policy by using state features.
{In our case, it is natural to require that the features have the following properties. First, they should be invariant to shape-preserving transformations, because our problem setting is not sensitive to any translation, rotation, and reflection, due to the symmetry of the moving cost $c(\cdot)$. Secondly, in approximating the value functions, for all $x \in \Xc_T$, we have $\varPhi(x;\theta) = \tilde{v}(x) = v(x) = 0$. This equation should hold for all $\theta$, therefore $\varPhi(x;\,\cdot\,) = 0$ in $\Xc_T$.

After extensive experiments, we identified several significant features listed below:
}
\begin{tightitemize}
\item The number of the non-visited waypoints;
\item The average distance among the non-visited waypoints;
\item The standard deviation of the distance among the non-visited waypoints;
\item The distance to the nearest non-visited waypoint;
\item The distance to the nearest transmission point;
\item The information collected and not transmitted yet.
\end{tightitemize}

{We define all distances in this experiment as the lengths of the shortest paths with obstacles. They can be pre-computed by the Deep First Search (DFS) algorithm.}

{The collection of functions $f(\,\cdot\,;\theta)$, $\theta \in \Rb^d$,
containing all second-order polynomials of the features, turned out to be
sufficient for approximating the value function.} Of course, the exact representation is out of the question: even a deterministic version of this problem, in its simplest setting, is equivalent to the notoriously difficult traveling salesman problem (see
\cite{junger1995traveling,cook2011traveling} and the references therein).

\vspace{1ex}
Even though the state $\Xc$ is decomposable with respect to each subproblem with specific locations of the waypoints and the transmission points $\Ac^{|\mathcal{W}| + |\mathcal{T}|}$, we combine these subproblems in a hyper-state generalized problem. If we focused on a specific subproblem only, with the locations of the waypoints and the transmission points fixed, our training procedure would fail, due to overfitting at the states visited frequently, and due to the exploitation of peculiarities of the geometry of one instance. Our value estimates at the unfamiliar states would be inaccurate, resulting in almost no policy improvement. The main advantage of considering the problem in the hyper-space is that we gain knowledge from multiple subproblems about the relevance of the geometry of the locations of the waypoints and the transmission points, and we are less likely to over-fit than in a specific subproblem. Our experimental results to be reported below indicate that a highly desirable effect is achieved: hyper-space learning improves the performance on each individual instance.

It is worth mentioning that we were not able to make the stochastic methods of temporal differences
\cite{sutton1988learning,Peng1993,Dayan1994,Jaakkola1994,tsitsiklis1997} and their risk-averse versions of \cite{kose2021risk} converge on our example. The dramatic and rare differences of transitions, due to different observation outcomes,
introduced shocks that required very small stepsizes; these, in turn,  impeded any meaningful progress.

\subsection{Simulation Process}

We focus on an example with the area  $\Ac \subset {\{0,...,9\}}^2$,  five waypoints, and two transmission points; in the hyper-space, $|\Xc| \approx 10^{20}$. A linear feature-based model is used to learn the estimated value, that is $\tilde{v} = F(\varPhi;\theta) = \varPhi \theta + \theta_0$, with $\varPhi$ containing all first- and second-order terms in the feature space. Our structured policy is a heuristic threshold policy $\varPi(\gamma\,;\,\cdot\,)$ controlled by a single parameter $\gamma$:
\[
\varPi(\gamma;s) =
\begin{cases}
    \text{go to the nearest transmission point,} & 
  \hspace{-1.5em}\text{if } I > 0 \text{ and } \min_{dW} {\ge} \gamma(\mathcal{W}, \mathcal{T}) \frac{\min_{dT}}{I};\\
    \text{go to the nearest non-visited waypoint,} & \
    \qquad \qquad \text{otherwise;}
\end{cases}
\]
where $\min_{dW}$ is the distance to the nearest non-visited waypoint,  $\min_{dT}$ is the distance to the nearest transmission point, and $I$ is the value of information carried. We allow the threshold parameter $\gamma(\mathcal{W}, \mathcal{T})$ to be different for different configurations. { Although the policy may change its destination on the way, it }is applicable to any connected search space, with an appropriate definition of the distance.

To solve the hyper-state problem  with estimated mini-batch risk measure, we uniformly sample $J$ configurations of the waypoints and the transmission points,  $\{(\mathcal{W}_1, \mathcal{T}_1),$
$\dots , (\mathcal{W}_J, \mathcal{T}_J)\}$. For each configuration, we simulate the trajectory samples $\{\xi_1, ..., \xi_J\}$ and average their estimated objectives in \eqref{minTD_sim_target}.

Each trajectory sample $\xi_i$ contains $M$
episodes sampled from different random initial states;
 the number of episodes is equal to the number of non-visited waypoints.
 The length of episode $m$ is denoted by $T_m$.
Also, we assume that no destruction happens during the simulation; the destruction probability is accounted for by the discount.

The total sample-based Bellman error for the robot navigation problem can be written as:
\[
\widetilde{L}(\theta) = \frac{1}{T} \sum_{j=1}^J \sum_{m=1}^M \sum_{t=1}^{T_m}\big[\varPhi_{s_{j,m,t}}\theta - c(s_{j,m,t}) - \alpha \widetilde{\sigma}^{(N)}_{s_{j,m,t}}\big(P_{s_{j,m,t}},\varPhi {\theta}\big)\big]^2,
\]
where $T$ is the total number of states visited.
Once we have estimated  $\theta$  by approximately minimizing $\widetilde{L}(\cdot)$, we may use the estimated
values $\widetilde{v}$ to improve the policy.

The one-step lookahead policy \eqref{lookahead} fails miserably on this problem because it depends on the local variations of the value function approximation about each state, which are subject to errors. Also, an improvement of the parameter $\gamma$ in the parametric policy
$\varPi(\gamma\,;\,\cdot\,)$ cannot be easily derived by comparing the model values at neighboring states. For this reason, we use a
specially designed variable-depth lookahead policy improvement operation. Its main idea is to use the estimated policy values { $\widetilde{w}_\gamma(s)$}
accumulated on the way from the state $s$ to the waypoint or the transmission point to which the policy leads.

For each new configuration (that might not be one of
the configurations included in the training process), and a freshly sampled test set of states $\widetilde{\Xc}$, we solve
 the minimization problem
\begin{equation}
\label{gamma-step}
\gamma^{k+1} = \argmin_{\gamma \in \varGamma} \frac{1}{|\widetilde{\Xc}|} \sum_{s\in \widetilde{\Xc}} \widetilde{w}_\gamma(s),
\end{equation}
where the lookahead value $\widetilde{w}_\gamma(s)$ is calculated by the recursion:
\begin{equation}
\label{policy-impr-multi}
\widetilde{w}_\gamma(s) =
\begin{cases}
    \widetilde{\sigma}_s^{(N)}\big(P_s(\varPi(\gamma;s)), F(\varPhi;\theta^{n})\big) & \
    \text{if } \varPi(\gamma;s) \in C \cup T;\\
    c\big(s,\varPi(\gamma;s)\big) + \alpha \widetilde{w}_\gamma(s_{\text {next}}) & \
    \text{otherwise.}
\end{cases}
\end{equation}
Here, $s_{\text{next}}$ is the state that follows $s$ when the control $\varPi(\gamma;s)$ is used.
After that, we set $\pi^{k+1} = \varPi(\gamma^{{k+1}})$. Observe that the lookahead depth in \eqref{policy-impr-multi} depends on the current state
and on the value of $\gamma$.
This variable-depth multi-step procedure helps to reduce the impact of errors at neighboring states, thus improving performance against the one-step estimation in \eqref{policy-impr}.


\subsection{Results}

In the policy iteration scheme, we randomly sampled $J = 50$ subproblems and $M = 80$ episodes and we tried the risk-neutral expectation measure ($N = 1$) and the mini-batch risk measure \eqref{batch-sup} with $N = 2$. We begin from an initial heuristic policy with a constant $\gamma$ for all subproblems, evaluate it in the hyperspace, and then, for every new configuration, we carry out one step of the policy improvement scheme.

Figure \ref{f:path} presents two typical samples of the configurations tested; none of them was used in the training process. It also provides the average performance of the three policies and its upper semideviation (an integrated measure of risk). We find that the learned policies of the mini-batch risk measure with $N = 2$ improve from the initial heuristic policy and are closer to the optimal policies solved by the dynamic programming equation (which was still possible to solve for each individual instance). Interestingly, they frequently outperform the learned policies for the risk-neutral (expectation) measure; apparently, the conservative nature of the risk mapping neutralizes the imperfections of the feature-based approximation. The improvement occurs on \emph{all} problem instances tested, {with distinct final policy $\gamma^*(\mathcal{W}, \mathcal{T})$ determined by the locations of the waypoints and the transmission points.}

In the top configuration, for a particular sequence of observation outcomes, in the learned policy based on the risk-neutral measure, the robot follows the trajectory $\color{blue}{(7,0)} \to \color{teal}{(5,4)} \to \color{teal}{(2,4)} \to \color{teal}{(5,4)} \to \color{red}{(1,4)} \to \color{teal}{(2,2)} \to \color{red}{(1,4)} \to \color{teal}{(3,8)} \to \color{teal}{(8,7)} \to \color{red}{(6,6)}$. In the learned policy based on the risk-averse measure, the robot is performing conservatively at the end with the trajectory $ \color{blue}{(7,0)} \to \color{teal}{(5,4)} \to \color{teal}{(2,4)} \to \color{teal}{(5,4)} \to \color{red}{(1,4)} \to \color{teal}{(2,2)} \to \color{red}{(1,4)} \to \color{teal}{(3,8)} \to \color{red}{(6,6)} \to \color{teal}{(8,7)} \to \color{red}{(6,6)}$.

In the bottom configuration, again for a particular sequence of observation outcomes, in the learned policy based on the risk-neutral measure, the robot follows the
trajectory $\color{blue}{(4,2)} \to \color{teal}{(4,5)} \to \color{teal}{(2,6)} \to \color{red}{(1,1)} \to \color{teal}{(7,5)} \to \color{teal}{(9,4)} \to \color{red}{(7,2)} \to \color{teal}{(3,9)} \to \color{red}{(7,2)}$. In the learned policy based on the risk-averse measure, the robot follows the
trajectory $\color{blue}{(4,2)} \to \color{teal}{(4,5)} \to \color{red}{(7,2)} \to \color{teal}{(7,5)} \to \color{red}{(7,2)} \to \color{teal}{(9,4)} \to \color{red}{(7,2)} \to \color{teal}{(6,2)} \to \color{red}{(1,1)} \to \color{teal}{(3,9)} \to \color{red}{(7,2)}$.

Since the policies differ only at the end in the top configuration, the risk-averse learned policy does not outperform the risk-neutral one so much in the statistics. However, in the bottom configuration, it turns out that the risk-averse policy is far better than the risk-neutral one in two statistics:  the expected value measure, and the upper-semi-deviation.

\begin{figure}[t]
  \hspace{-2em}\includegraphics[width=13.5cm]{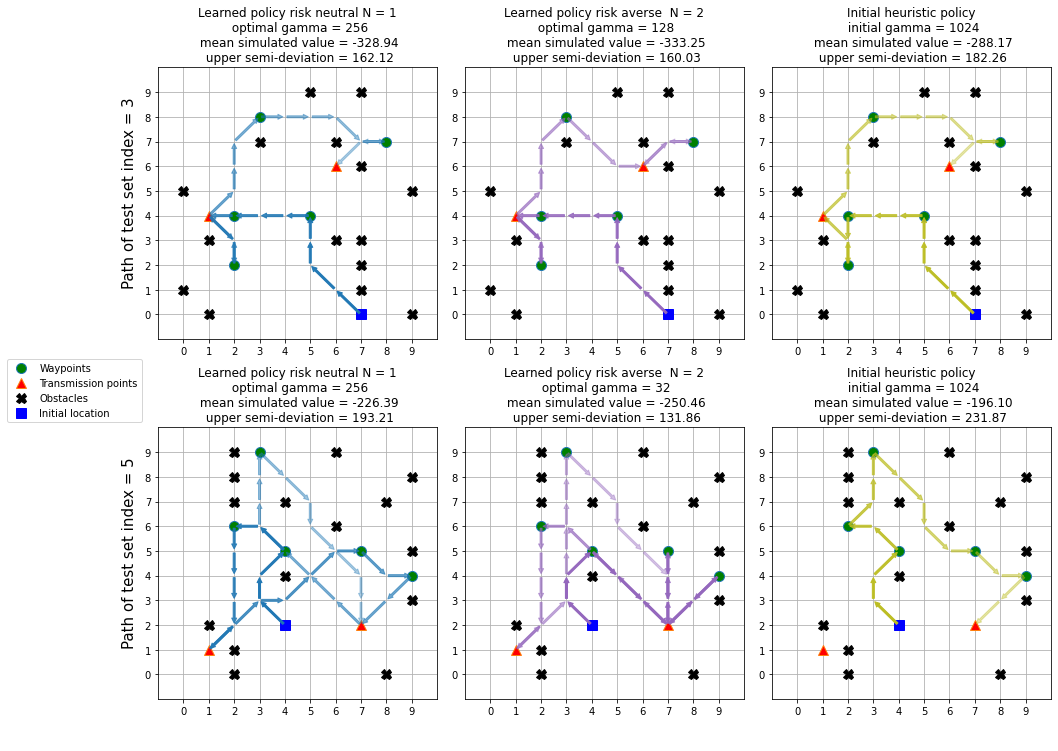}
  \caption{Robot trajectories for different policies on two random configurations (top and bottom). On the left are the trajectories of the robot using the learned risk-neutral policy. In the middle are the trajectories of the robot using the learned risk-averse policy, with the mini-batch size $N=2$. On the right are the trajectories of the robot using the initial policy of moving to the nearest relevant point (an unobserved waypoint or a transmission point). On top of each plot, we provide performance statistics obtained by multiple simulations.}
  \label{f:path}
\end{figure}


\section{Conclusion}

A number of observations follow from the research reported here:
\begin{tightitemize}
\item Markov risk measures with mini-batch transition risk mappings lead to a tractable
feature-based policy evaluation problem for which an implementable algorithm can be designed;
\item It is useful to train in a high-dimensional hyperspace, involving the problem configuration;
\item Stochastic methods of temporal difference learning do not work well on our test example, due to the highly random outcomes at some states;
\item Policy improvement within parametric policies and with multi-step look-ahead models stabilizes the
learning process;
\item Risk aversion neutralizes the effect of the imperfections of feature-based models.
\end{tightitemize}
We believe that our observations provide a preliminary insight into risk-averse learning and control with value function approximation for Markov decision problems.

\noindent



\bibliographystyle{abbrv}

\begin{thebibliography}{69}
\ifx \bisbn   \undefined \def \bisbn  #1{ISBN #1}\fi
\ifx \binits  \undefined \def \binits#1{#1}\fi
\ifx \bauthor  \undefined \def \bauthor#1{#1}\fi
\ifx \batitle  \undefined \def \batitle#1{#1}\fi
\ifx \bjtitle  \undefined \def \bjtitle#1{#1}\fi
\ifx \bvolume  \undefined \def \bvolume#1{\textbf{#1}}\fi
\ifx \byear  \undefined \def \byear#1{#1}\fi
\ifx \bissue  \undefined \def \bissue#1{#1}\fi
\ifx \bfpage  \undefined \def \bfpage#1{#1}\fi
\ifx \blpage  \undefined \def \blpage #1{#1}\fi
\ifx \burl  \undefined \def \burl#1{\textsf{#1}}\fi
\ifx \doiurl  \undefined \def \doiurl#1{\url{https://doi.org/#1}}\fi
\ifx \betal  \undefined \def \betal{\textit{et al.}}\fi
\ifx \binstitute  \undefined \def \binstitute#1{#1}\fi
\ifx \binstitutionaled  \undefined \def \binstitutionaled#1{#1}\fi
\ifx \bctitle  \undefined \def \bctitle#1{#1}\fi
\ifx \beditor  \undefined \def \beditor#1{#1}\fi
\ifx \bpublisher  \undefined \def \bpublisher#1{#1}\fi
\ifx \bbtitle  \undefined \def \bbtitle#1{#1}\fi
\ifx \bedition  \undefined \def \bedition#1{#1}\fi
\ifx \bseriesno  \undefined \def \bseriesno#1{#1}\fi
\ifx \blocation  \undefined \def \blocation#1{#1}\fi
\ifx \bsertitle  \undefined \def \bsertitle#1{#1}\fi
\ifx \bsnm \undefined \def \bsnm#1{#1}\fi
\ifx \bsuffix \undefined \def \bsuffix#1{#1}\fi
\ifx \bparticle \undefined \def \bparticle#1{#1}\fi
\ifx \barticle \undefined \def \barticle#1{#1}\fi

\ifx \bconfdate \undefined \def \bconfdate #1{#1}\fi
\ifx \botherref \undefined \def \botherref #1{#1}\fi
\ifx \url \undefined \def \url#1{\textsf{#1}}\fi
\ifx \bchapter \undefined \def \bchapter#1{#1}\fi
\ifx \bbook \undefined \def \bbook#1{#1}\fi
\ifx \bcomment \undefined \def \bcomment#1{#1}\fi
\ifx \oauthor \undefined \def \oauthor#1{#1}\fi
\ifx \citeauthoryear \undefined \def \citeauthoryear#1{#1}\fi
\ifx \endbibitem  \undefined \def \endbibitem {}\fi
\ifx \bconflocation  \undefined \def \bconflocation#1{#1}\fi
\ifx \arxivurl  \undefined \def \arxivurl#1{\textsf{#1}}\fi
\csname PreBibitemsHook\endcsname

\bibitem[\protect\citeauthoryear{Puterman}{1994}]{Puterman1994}
\begin{bbook}
\bauthor{\bsnm{Puterman}, \binits{M.L.}}:
\bbtitle{{Markov} Decision Processes}.
\bpublisher{Wiley},
\blocation{Jersey City, NJ}
(\byear{1994})
\end{bbook}
\endbibitem

\bibitem[\protect\citeauthoryear{Bertsekas}{2017}]{bertsekas2017dynamic}
\begin{bbook}
\bauthor{\bsnm{Bertsekas}, \binits{D.P.}}:
\bbtitle{Dynamic Programming and Optimal Control},
\bedition{4th} edn.
\bpublisher{Athena Scientific},
\blocation{Boston}
(\byear{2017})
\end{bbook}
\endbibitem

\bibitem[\protect\citeauthoryear{Scandolo}{2003}]{Scandolo:2003}
\begin{botherref}
\oauthor{\bsnm{Scandolo}, \binits{G.}}:
Risk measures in a dynamic setting.
PhD thesis,
Universit\`{a} degli Studi di Milano,
Milan, Italy
(2003)
\end{botherref}
\endbibitem

\bibitem[\protect\citeauthoryear{Cheridito et~al.}{2006}]{CDK:2006}
\begin{barticle}
\bauthor{\bsnm{Cheridito}, \binits{P.}},
\bauthor{\bsnm{Delbaen}, \binits{F.}},
\bauthor{\bsnm{Kupper}, \binits{M.}}:
\batitle{Dynamic monetary risk measures for bounded discrete-time processes}.
\bjtitle{Electronic Journal of Probability}
\bvolume{11},
\bfpage{57}--\blpage{106}
(\byear{2006})
\end{barticle}
\endbibitem

\bibitem[\protect\citeauthoryear{Ruszczy\'nski and
  Shapiro}{2006}]{RuszczynskiShapiro2006b}
\begin{barticle}
\bauthor{\bsnm{Ruszczy\'nski}, \binits{A.}},
\bauthor{\bsnm{Shapiro}, \binits{A.}}:
\batitle{Conditional risk mappings}.
\bjtitle{Mathematics of Operations Research}
\bvolume{31}(\bissue{3}),
\bfpage{544}--\blpage{561}
(\byear{2006})
\end{barticle}
\endbibitem

\bibitem[\protect\citeauthoryear{Artzner et~al.}{2007}]{ADEHK:2007}
\begin{barticle}
\bauthor{\bsnm{Artzner}, \binits{P.}},
\bauthor{\bsnm{Delbaen}, \binits{F.}},
\bauthor{\bsnm{Eber}, \binits{J.-M.}},
\bauthor{\bsnm{Heath}, \binits{D.}},
\bauthor{\bsnm{Ku}, \binits{H.}}:
\batitle{Coherent multiperiod risk adjusted values and {B}ellman's principle}.
\bjtitle{Annals of Operations Research}
\bvolume{152},
\bfpage{5}--\blpage{22}
(\byear{2007})
\end{barticle}
\endbibitem

\bibitem[\protect\citeauthoryear{Pflug and R\"omisch}{2007}]{PflRom:07}
\begin{bbook}
\bauthor{\bsnm{Pflug}, \binits{G.C.}},
\bauthor{\bsnm{R\"omisch}, \binits{W.}}:
\bbtitle{Modeling, Measuring and Managing Risk}.
\bpublisher{World Scientific},
\blocation{Singapore}
(\byear{2007})
\end{bbook}
\endbibitem

\bibitem[\protect\citeauthoryear{Shapiro et~al.}{2021}]{shapiro2021lectures}
\begin{bbook}
\bauthor{\bsnm{Shapiro}, \binits{A.}},
\bauthor{\bsnm{Dentcheva}, \binits{D.}},
\bauthor{\bsnm{Ruszczy\'nski}, \binits{A.}}:
\bbtitle{Lectures on {S}tochastic {P}rogramming: {M}odeling And {T}heory}.
\bpublisher{SIAM},
\blocation{Philadelphia, PA}
(\byear{2021})
\end{bbook}
\endbibitem

\bibitem[\protect\citeauthoryear{Ruszczy{\'n}ski}{2010}]{Ruszczynski2010Markov}
\begin{barticle}
\bauthor{\bsnm{Ruszczy{\'n}ski}, \binits{A.}}:
\batitle{Risk-averse dynamic programming for {M}arkov decision processes}.
\bjtitle{Math. Program.}
\bvolume{125}(\bissue{2, Ser. B}),
\bfpage{235}--\blpage{261}
(\byear{2010})
\end{barticle}
\endbibitem

\bibitem[\protect\citeauthoryear{Shen et~al.}{2013}]{shen2013risk}
\begin{barticle}
\bauthor{\bsnm{Shen}, \binits{Y.}},
\bauthor{\bsnm{Stannat}, \binits{W.}},
\bauthor{\bsnm{Obermayer}, \binits{K.}}:
\batitle{Risk-sensitive {M}arkov control processes}.
\bjtitle{SIAM Journal on Control and Optimization}
\bvolume{51}(\bissue{5}),
\bfpage{3652}--\blpage{3672}
(\byear{2013})
\end{barticle}
\endbibitem

\bibitem[\protect\citeauthoryear{Lin and Marcus}{2013}]{lin2013dynamic}
\begin{bchapter}
\bauthor{\bsnm{Lin}, \binits{K.}},
\bauthor{\bsnm{Marcus}, \binits{S.I.}}:
\bctitle{Dynamic programming with non-convex risk-sensitive measures}.
In: \bbtitle{American Control Conference (ACC), 2013},
pp. \bfpage{6778}--\blpage{6783}
(\byear{2013}).
\bcomment{IEEE}
\end{bchapter}
\endbibitem

\bibitem[\protect\citeauthoryear{\c{C}avus and
  Ruszczy\'nski}{2014a}]{CavusRuszczynski2014a}
\begin{barticle}
\bauthor{\bsnm{\c{C}avus}, \binits{O.}},
\bauthor{\bsnm{Ruszczy\'nski}, \binits{A.}}:
\batitle{Computational methods for risk-averse undiscounted transient
  {{Markov}} models}.
\bjtitle{Operations Research}
\bvolume{62}(\bissue{2}),
\bfpage{401}--\blpage{417}
(\byear{2014})
\end{barticle}
\endbibitem

\bibitem[\protect\citeauthoryear{\c{C}avus and
  Ruszczy\'nski}{2014b}]{CavusRuszczynski2014b}
\begin{barticle}
\bauthor{\bsnm{\c{C}avus}, \binits{O.}},
\bauthor{\bsnm{Ruszczy\'nski}, \binits{A.}}:
\batitle{Risk-averse control of undiscounted transient {{Markov}} models}.
\bjtitle{{SIAM} Journal on Control and Optimization}
\bvolume{52}(\bissue{6}),
\bfpage{3935}--\blpage{3966}
(\byear{2014})
\end{barticle}
\endbibitem

\bibitem[\protect\citeauthoryear{Fan and Ruszczy{\'n}ski}{2018}]{fan2018risk}
\begin{barticle}
\bauthor{\bsnm{Fan}, \binits{J.}},
\bauthor{\bsnm{Ruszczy{\'n}ski}, \binits{A.}}:
\batitle{Risk measurement and risk-averse control of partially observable
  discrete-time {M}arkov systems}.
\bjtitle{Mathematical Methods of Operations Research}
\bvolume{88},
\bfpage{161}--\blpage{184}
(\byear{2018})
\end{barticle}
\endbibitem

\bibitem[\protect\citeauthoryear{Fan and
  Ruszczy{\'n}ski}{2022}]{fan2022process}
\begin{barticle}
\bauthor{\bsnm{Fan}, \binits{J.}},
\bauthor{\bsnm{Ruszczy{\'n}ski}, \binits{A.}}:
\batitle{Process-based risk measures and risk-averse control of discrete-time
  systems}.
\bjtitle{Mathematical Programming}
\bvolume{191},
\bfpage{113}--\blpage{140}
(\byear{2022})
\end{barticle}
\endbibitem

\bibitem[\protect\citeauthoryear{B{\"a}uerle and
  Glauner}{2022}]{bauerle2022markov}
\begin{barticle}
\bauthor{\bsnm{B{\"a}uerle}, \binits{N.}},
\bauthor{\bsnm{Glauner}, \binits{A.}}:
\batitle{Markov decision processes with recursive risk measures}.
\bjtitle{European Journal of Operational Research}
\bvolume{296}(\bissue{3}),
\bfpage{953}--\blpage{966}
(\byear{2022})
\end{barticle}
\endbibitem

\bibitem[\protect\citeauthoryear{Sutton}{1988}]{sutton1988learning}
\begin{barticle}
\bauthor{\bsnm{Sutton}, \binits{R.S.}}:
\batitle{Learning to predict by the methods of temporal differences}.
\bjtitle{Machine Learning}
\bvolume{3}(\bissue{1}),
\bfpage{9}--\blpage{44}
(\byear{1988})
\end{barticle}
\endbibitem

\bibitem[\protect\citeauthoryear{Bradtke and Barto}{1996}]{bradtke1996linear}
\begin{barticle}
\bauthor{\bsnm{Bradtke}, \binits{S.J.}},
\bauthor{\bsnm{Barto}, \binits{A.G.}}:
\batitle{Linear least-squares algorithms for temporal difference learning}.
\bjtitle{Machine Learning}
\bvolume{22}(\bissue{1}),
\bfpage{33}--\blpage{57}
(\byear{1996})
\end{barticle}
\endbibitem

\bibitem[\protect\citeauthoryear{Tsitsiklis and Roy}{1997}]{tsitsiklis1997}
\begin{barticle}
\bauthor{\bsnm{Tsitsiklis}, \binits{J.N.}},
\bauthor{\bsnm{Roy}, \binits{V.}}:
\batitle{An analysis of temporal-difference learning with function
  approximation}.
\bjtitle{IEEE Transactions on Automatic Control}
\bvolume{42},
\bfpage{674}--\blpage{690}
(\byear{1997})
\end{barticle}
\endbibitem

\bibitem[\protect\citeauthoryear{Sutton and Barto}{1998}]{Sutton1998}
\begin{bbook}
\bauthor{\bsnm{Sutton}, \binits{R.S.}},
\bauthor{\bsnm{Barto}, \binits{A.G.}}:
\bbtitle{Reinforcement Learning: An Introduction}.
\bpublisher{MIT Press},
\blocation{Cambridge}
(\byear{1998})
\end{bbook}
\endbibitem

\bibitem[\protect\citeauthoryear{Melo and Ribeiro}{2007}]{melo2007q}
\begin{bchapter}
\bauthor{\bsnm{Melo}, \binits{F.S.}},
\bauthor{\bsnm{Ribeiro}, \binits{M.I.}}:
\bctitle{Q-learning with linear function approximation}.
In: \bbtitle{International Conference on Computational Learning Theory},
pp. \bfpage{308}--\blpage{322}
(\byear{2007}).
\bcomment{Springer}
\end{bchapter}
\endbibitem

\bibitem[\protect\citeauthoryear{Yang and Wang}{2020}]{yang2020reinforcement}
\begin{bchapter}
\bauthor{\bsnm{Yang}, \binits{Z.}},
\bauthor{\bsnm{Wang}, \binits{M.}}:
\bctitle{Reinforcement learning in feature space: Matrix bandit, kernels, and
  regret bound}.
In: \bbtitle{International Conference on Machine Learning},
pp. \bfpage{10746}--\blpage{10756}
(\byear{2020}).
\bcomment{PMLR}
\end{bchapter}
\endbibitem

\bibitem[\protect\citeauthoryear{Jin et~al.}{2020}]{jin2020provably}
\begin{bchapter}
\bauthor{\bsnm{Jin}, \binits{C.}},
\bauthor{\bsnm{Yang}, \binits{Z.}},
\bauthor{\bsnm{Wang}, \binits{Z.}},
\bauthor{\bsnm{Jordan}, \binits{M.I.}}:
\bctitle{Provably efficient reinforcement learning with linear function
  approximation}.
In: \bbtitle{Conference on Learning Theory},
pp. \bfpage{2137}--\blpage{2143}
(\byear{2020}).
\bcomment{PMLR}
\end{bchapter}
\endbibitem

\bibitem[\protect\citeauthoryear{Borkar}{2001}]{Borkar2001}
\begin{barticle}
\bauthor{\bsnm{Borkar}, \binits{V.S.}}:
\batitle{A sensitivity formula for risk-sensitive cost and the actor–critic
  algorithm}.
\bjtitle{Systems \& Control Letters}
\bvolume{44}(\bissue{5}),
\bfpage{339}--\blpage{346}
(\byear{2001})
\end{barticle}
\endbibitem

\bibitem[\protect\citeauthoryear{Borkar}{2002}]{Borkar2002}
\begin{barticle}
\bauthor{\bsnm{Borkar}, \binits{V.S.}}:
\batitle{Q-learning for risk-sensitive control}.
\bjtitle{Mathematics of Operations Research}
\bvolume{27}(\bissue{2}),
\bfpage{294}--\blpage{311}
(\byear{2002})
\end{barticle}
\endbibitem

\bibitem[\protect\citeauthoryear{Basu et~al.}{2008}]{basu2008learning}
\begin{barticle}
\bauthor{\bsnm{Basu}, \binits{A.}},
\bauthor{\bsnm{Bhattacharyya}, \binits{T.}},
\bauthor{\bsnm{Borkar}, \binits{V.S.}}:
\batitle{A learning algorithm for risk-sensitive cost}.
\bjtitle{Mathematics of Operations Research}
\bvolume{33}(\bissue{4}),
\bfpage{880}--\blpage{898}
(\byear{2008})
\end{barticle}
\endbibitem

\bibitem[\protect\citeauthoryear{Fei and Xu}{2022}]{fei2022cascaded}
\begin{bchapter}
\bauthor{\bsnm{Fei}, \binits{Y.}},
\bauthor{\bsnm{Xu}, \binits{R.}}:
\bctitle{Cascaded gaps: Towards logarithmic regret for risk-sensitive
  reinforcement learning}.
In: \bbtitle{International Conference on Machine Learning},
pp. \bfpage{6392}--\blpage{6417}
(\byear{2022}).
\bcomment{PMLR}
\end{bchapter}
\endbibitem

\bibitem[\protect\citeauthoryear{Tamar et~al.}{2012}]{Tamar2012}
\begin{bchapter}
\bauthor{\bsnm{Tamar}, \binits{A.}},
\bauthor{\bsnm{Di~Castro}, \binits{D.}},
\bauthor{\bsnm{Mannor}, \binits{S.}}:
\bctitle{Policy gradients with variance related risk criteria}.
In: \bbtitle{Proceedings of the Twenty-ninth International Conference on
  Machine Learning},
pp. \bfpage{387}--\blpage{396}
(\byear{2012})
\end{bchapter}
\endbibitem

\bibitem[\protect\citeauthoryear{Prashanth and
  Ghavamzadeh}{2014}]{Prashanth2014}
\begin{botherref}
\oauthor{\bsnm{Prashanth}, \binits{L.A.}},
\oauthor{\bsnm{Ghavamzadeh}, \binits{M.}}:
Actor-critic algorithms for risk-sensitive reinforcement learning.
CoRR
\textbf{abs/1403.6530}
(2014)
\end{botherref}
\endbibitem

\bibitem[\protect\citeauthoryear{Chow and Ghavamzadeh}{2014}]{Chow2014}
\begin{bchapter}
\bauthor{\bsnm{Chow}, \binits{Y.}},
\bauthor{\bsnm{Ghavamzadeh}, \binits{M.}}:
\bctitle{Algorithms for {CVaR} optimization in {MDP}s}.
In: \bbtitle{Advances in Neural Information Processing Systems},
pp. \bfpage{3509}--\blpage{3517}
(\byear{2014})
\end{bchapter}
\endbibitem

\bibitem[\protect\citeauthoryear{Chow et~al.}{2017}]{Chow2015a}
\begin{barticle}
\bauthor{\bsnm{Chow}, \binits{Y.}},
\bauthor{\bsnm{Ghavamzadeh}, \binits{M.}},
\bauthor{\bsnm{Janson}, \binits{L.}},
\bauthor{\bsnm{Pavone}, \binits{M.}}:
\batitle{Risk-constrained reinforcement learning with percentile risk
  criteria}.
\bjtitle{The Journal of Machine Learning Research}
\bvolume{18}(\bissue{1}),
\bfpage{6070}--\blpage{6120}
(\byear{2017})
\end{barticle}
\endbibitem

\bibitem[\protect\citeauthoryear{Ma et~al.}{2018}]{ma2018risk}
\begin{barticle}
\bauthor{\bsnm{Ma}, \binits{W.-J.}},
\bauthor{\bsnm{Oh}, \binits{C.}},
\bauthor{\bsnm{Liu}, \binits{Y.}},
\bauthor{\bsnm{Dentcheva}, \binits{D.}},
\bauthor{\bsnm{Zavlanos}, \binits{M.M.}}:
\batitle{Risk-averse access point selection in wireless communication
  networks}.
\bjtitle{IEEE Transactions on Control of Network Systems}
\bvolume{6}(\bissue{1}),
\bfpage{24}--\blpage{36}
(\byear{2018})
\end{barticle}
\endbibitem

\bibitem[\protect\citeauthoryear{Yu et~al.}{2018}]{yu2018approximate}
\begin{barticle}
\bauthor{\bsnm{Yu}, \binits{P.}},
\bauthor{\bsnm{Haskell}, \binits{W.B.}},
\bauthor{\bsnm{Xu}, \binits{H.}}:
\batitle{Approximate value iteration for risk-aware {M}arkov decision
  processes}.
\bjtitle{IEEE Transactions on Automatic Control}
\bvolume{63}(\bissue{9}),
\bfpage{3135}--\blpage{3142}
(\byear{2018})
\end{barticle}
\endbibitem

\bibitem[\protect\citeauthoryear{Huang and Haskell}{2017}]{huang2017risk}
\begin{bchapter}
\bauthor{\bsnm{Huang}, \binits{W.}},
\bauthor{\bsnm{Haskell}, \binits{W.B.}}:
\bctitle{Risk-aware {Q}-learning for {M}arkov decision processes}.
In: \bbtitle{2017 IEEE 56th Annual Conference on Decision and Control (CDC)},
pp. \bfpage{4928}--\blpage{4933}
(\byear{2017}).
\bcomment{IEEE}
\end{bchapter}
\endbibitem

\bibitem[\protect\citeauthoryear{Ma et~al.}{2017}]{ma2017risk}
\begin{bchapter}
\bauthor{\bsnm{Ma}, \binits{W.-J.}},
\bauthor{\bsnm{Dentcheva}, \binits{D.}},
\bauthor{\bsnm{Zavlanos}, \binits{M.M.}}:
\bctitle{Risk-averse sensor planning using distributed policy gradient}.
In: \bbtitle{2017 American Control Conference (ACC)},
pp. \bfpage{4839}--\blpage{4844}
(\byear{2017}).
\bcomment{IEEE}
\end{bchapter}
\endbibitem

\bibitem[\protect\citeauthoryear{Tamar et~al.}{2014}]{Tamar2014}
\begin{bchapter}
\bauthor{\bsnm{Tamar}, \binits{A.}},
\bauthor{\bsnm{Mannor}, \binits{S.}},
\bauthor{\bsnm{Xu}, \binits{H.}}:
\bctitle{Scaling up robust {MDPs} using function approximation}.
In: \bbtitle{International Conference on Machine Learning},
pp. \bfpage{181}--\blpage{189}
(\byear{2014})
\end{bchapter}
\endbibitem

\bibitem[\protect\citeauthoryear{Tamar et~al.}{2017}]{Tamar2017}
\begin{barticle}
\bauthor{\bsnm{Tamar}, \binits{A.}},
\bauthor{\bsnm{Chow}, \binits{Y.}},
\bauthor{\bsnm{Ghavamzadeh}, \binits{M.}},
\bauthor{\bsnm{Mannor}, \binits{S.}}:
\batitle{Sequential decision making with coherent risk}.
\bjtitle{IEEE Transactions on Automatic Control}
\bvolume{62}(\bissue{7}),
\bfpage{3323}--\blpage{3338}
(\byear{2017})
\end{barticle}
\endbibitem

\bibitem[\protect\citeauthoryear{K{\"o}se and
  Ruszczy{\'n}ski}{2021}]{kose2021risk}
\begin{barticle}
\bauthor{\bsnm{K{\"o}se}, \binits{{\"U}.}},
\bauthor{\bsnm{Ruszczy{\'n}ski}, \binits{A.}}:
\batitle{Risk-averse learning by temporal difference methods with markov risk
  measures}.
\bjtitle{The Journal of Machine Learning Research}
\bvolume{22}(\bissue{1}),
\bfpage{1800}--\blpage{1833}
(\byear{2021})
\end{barticle}
\endbibitem

\bibitem[\protect\citeauthoryear{Yin et~al.}{2022}]{yin2022near}
\begin{bchapter}
\bauthor{\bsnm{Yin}, \binits{M.}},
\bauthor{\bsnm{Duan}, \binits{Y.}},
\bauthor{\bsnm{Wang}, \binits{M.}},
\bauthor{\bsnm{Wang}, \binits{Y.-X.}}:
\bctitle{Near-optimal offline reinforcement learning with linear
  representation: Leveraging variance information with pessimism}.
In: \bbtitle{International Conference on Learning Representation}
(\byear{2022})
\end{bchapter}
\endbibitem

\bibitem[\protect\citeauthoryear{Lam et~al.}{2023}]{lamrisk}
\begin{bchapter}
\bauthor{\bsnm{Lam}, \binits{T.}},
\bauthor{\bsnm{Verma}, \binits{A.}},
\bauthor{\bsnm{Low}, \binits{B.K.H.}},
\bauthor{\bsnm{Jaillet}, \binits{P.}}:
\bctitle{Risk-aware reinforcement learning with coherent risk measures and
  non-linear function approximation}.
In: \bbtitle{The Eleventh International Conference on Learning Representations}
(\byear{2023})
\end{bchapter}
\endbibitem

\bibitem[\protect\citeauthoryear{Cheridito and
  Kupper}{2011}]{cheridito2011composition}
\begin{barticle}
\bauthor{\bsnm{Cheridito}, \binits{P.}},
\bauthor{\bsnm{Kupper}, \binits{M.}}:
\batitle{Composition of time-consistent dynamic monetary risk measures in
  discrete time}.
\bjtitle{International Journal of Theoretical and Applied Finance}
\bvolume{14}(\bissue{01}),
\bfpage{137}--\blpage{162}
(\byear{2011})
\end{barticle}
\endbibitem

\bibitem[\protect\citeauthoryear{Artzner
  et~al.}{1999}]{ArtznerDelbaenEberEtAl1999}
\begin{barticle}
\bauthor{\bsnm{Artzner}, \binits{P.}},
\bauthor{\bsnm{Delbaen}, \binits{F.}},
\bauthor{\bsnm{Eber}, \binits{J.-M.}},
\bauthor{\bsnm{Heath}, \binits{D.}}:
\batitle{Coherent measures of risk}.
\bjtitle{Mathematical Finance}
\bvolume{9}(\bissue{3}),
\bfpage{203}--\blpage{228}
(\byear{1999})
\end{barticle}
\endbibitem

\bibitem[\protect\citeauthoryear{B{\"a}uerle and Ott}{2011}]{bauerle2011markov}
\begin{barticle}
\bauthor{\bsnm{B{\"a}uerle}, \binits{N.}},
\bauthor{\bsnm{Ott}, \binits{J.}}:
\batitle{Markov decision processes with {A}verage-{V}alue-at-{R}isk criteria}.
\bjtitle{Mathematical Methods of Operations Research}
\bvolume{74},
\bfpage{361}--\blpage{379}
(\byear{2011})
\end{barticle}
\endbibitem

\bibitem[\protect\citeauthoryear{U{\u{g}}urlu}{2017}]{uugurlu2017controlled}
\begin{barticle}
\bauthor{\bsnm{U{\u{g}}urlu}, \binits{K.}}:
\batitle{Controlled {M}arkov decision processes with {AVaR} criteria for
  unbounded costs}.
\bjtitle{Journal of Computational and Applied Mathematics}
\bvolume{319},
\bfpage{24}--\blpage{37}
(\byear{2017})
\end{barticle}
\endbibitem

\bibitem[\protect\citeauthoryear{Ding and Feinberg}{2022}]{ding2022sequential}
\begin{botherref}
\oauthor{\bsnm{Ding}, \binits{R.}},
\oauthor{\bsnm{Feinberg}, \binits{E.A.}}:
Sequential optimization of {CVaR}.
arXiv preprint arXiv:2211.07288
(2022)
\end{botherref}
\endbibitem

\bibitem[\protect\citeauthoryear{Ogryczak and
  Ruszczy{\'n}ski}{1999}]{ogryczak1999stochastic}
\begin{barticle}
\bauthor{\bsnm{Ogryczak}, \binits{W.}},
\bauthor{\bsnm{Ruszczy{\'n}ski}, \binits{A.}}:
\batitle{From stochastic dominance to mean-risk models: Semideviations as risk
  measures}.
\bjtitle{European Journal of Operational Research}
\bvolume{116}(\bissue{1}),
\bfpage{33}--\blpage{50}
(\byear{1999})
\end{barticle}
\endbibitem

\bibitem[\protect\citeauthoryear{Ogryczak and
  Ruszczy\'nski}{2002}]{OgryczakRuszczynski2002}
\begin{barticle}
\bauthor{\bsnm{Ogryczak}, \binits{W.}},
\bauthor{\bsnm{Ruszczy\'nski}, \binits{A.}}:
\batitle{Dual stochastic dominance and related mean-risk models}.
\bjtitle{{SIAM} J. Optim.}
\bvolume{13}(\bissue{1}),
\bfpage{60}--\blpage{78}
(\byear{2002})
\end{barticle}
\endbibitem

\bibitem[\protect\citeauthoryear{Rockafellar and
  Uryasev}{2002}]{RockafellarUryasev2002}
\begin{barticle}
\bauthor{\bsnm{Rockafellar}, \binits{R.T.}},
\bauthor{\bsnm{Uryasev}, \binits{S.}}:
\batitle{Conditional value-at-risk for general loss distributions}.
\bjtitle{Journal of Banking {\&} Finance}
\bvolume{26}(\bissue{7}),
\bfpage{1443}--\blpage{1471}
(\byear{2002})
\end{barticle}
\endbibitem

\bibitem[\protect\citeauthoryear{Tsitsiklis and
  Van~Roy}{1997}]{tsitsiklis1997analysis}
\begin{barticle}
\bauthor{\bsnm{Tsitsiklis}, \binits{J.N.}},
\bauthor{\bsnm{Van~Roy}, \binits{B.}}:
\batitle{An analysis of temporal-difference learning with function
  approximation}.
\bjtitle{IEEE Fransactions on Automatic Control}
\bvolume{42}(\bissue{5}),
\bfpage{674}--\blpage{690}
(\byear{1997})
\end{barticle}
\endbibitem

\bibitem[\protect\citeauthoryear{Bubeck et~al.}{2012}]{bubeck2012regret}
\begin{barticle}
\bauthor{\bsnm{Bubeck}, \binits{S.}},
\bauthor{\bsnm{Cesa-Bianchi}, \binits{N.}}, \betal:
\batitle{Regret analysis of stochastic and nonstochastic multi-armed bandit
  problems}.
\bjtitle{Foundations and Trends{\textregistered} in Machine Learning}
\bvolume{5}(\bissue{1}),
\bfpage{1}--\blpage{122}
(\byear{2012})
\end{barticle}
\endbibitem

\bibitem[\protect\citeauthoryear{Lattimore and
  Szepesv{\'a}ri}{2020}]{lattimore2020bandit}
\begin{bbook}
\bauthor{\bsnm{Lattimore}, \binits{T.}},
\bauthor{\bsnm{Szepesv{\'a}ri}, \binits{C.}}:
\bbtitle{Bandit Algorithms}.
\bpublisher{Cambridge University Press},
\blocation{Cambridge, UK}
(\byear{2020})
\end{bbook}
\endbibitem

\bibitem[\protect\citeauthoryear{Ayoub et~al.}{2020}]{ayoub2020model}
\begin{bchapter}
\bauthor{\bsnm{Ayoub}, \binits{A.}},
\bauthor{\bsnm{Jia}, \binits{Z.}},
\bauthor{\bsnm{Szepesvari}, \binits{C.}},
\bauthor{\bsnm{Wang}, \binits{M.}},
\bauthor{\bsnm{Yang}, \binits{L.}}:
\bctitle{Model-based reinforcement learning with value-targeted regression}.
In: \bbtitle{International Conference on Machine Learning},
pp. \bfpage{463}--\blpage{474}
(\byear{2020}).
\bcomment{PMLR}
\end{bchapter}
\endbibitem

\bibitem[\protect\citeauthoryear{Modi et~al.}{2020}]{modi2020sample}
\begin{bchapter}
\bauthor{\bsnm{Modi}, \binits{A.}},
\bauthor{\bsnm{Jiang}, \binits{N.}},
\bauthor{\bsnm{Tewari}, \binits{A.}},
\bauthor{\bsnm{Singh}, \binits{S.}}:
\bctitle{Sample complexity of reinforcement learning using linearly combined
  model ensembles}.
In: \bbtitle{International Conference on Artificial Intelligence and
  Statistics},
pp. \bfpage{2010}--\blpage{2020}
(\byear{2020}).
\bcomment{PMLR}
\end{bchapter}
\endbibitem

\bibitem[\protect\citeauthoryear{Du et~al.}{2021}]{du2021bilinear}
\begin{bchapter}
\bauthor{\bsnm{Du}, \binits{S.}},
\bauthor{\bsnm{Kakade}, \binits{S.}},
\bauthor{\bsnm{Lee}, \binits{J.}},
\bauthor{\bsnm{Lovett}, \binits{S.}},
\bauthor{\bsnm{Mahajan}, \binits{G.}},
\bauthor{\bsnm{Sun}, \binits{W.}},
\bauthor{\bsnm{Wang}, \binits{R.}}:
\bctitle{Bilinear classes: A structural framework for provable generalization
  in {RL}}.
In: \bbtitle{International Conference on Machine Learning},
pp. \bfpage{2826}--\blpage{2836}
(\byear{2021}).
\bcomment{PMLR}
\end{bchapter}
\endbibitem

\bibitem[\protect\citeauthoryear{Dong et~al.}{2019}]{dong2019provably}
\begin{botherref}
\oauthor{\bsnm{Dong}, \binits{S.}},
\oauthor{\bsnm{Van~Roy}, \binits{B.}},
\oauthor{\bsnm{Zhou}, \binits{Z.}}:
Provably efficient reinforcement learning with aggregated states.
arXiv preprint arXiv:1912.06366
(2019)
\end{botherref}
\endbibitem

\bibitem[\protect\citeauthoryear{Katehakis and
  Smit}{2012}]{katehakis2012successive}
\begin{barticle}
\bauthor{\bsnm{Katehakis}, \binits{M.N.}},
\bauthor{\bsnm{Smit}, \binits{L.C.}}:
\batitle{A successive lumping procedure for a class of {M}arkov chains}.
\bjtitle{Probability in the Engineering and Informational Sciences}
\bvolume{26}(\bissue{4}),
\bfpage{483}--\blpage{508}
(\byear{2012})
\end{barticle}
\endbibitem

\bibitem[\protect\citeauthoryear{Ruszczy\'nski and
  Shapiro}{2006}]{RuszczynskiShapiro2006a}
\begin{barticle}
\bauthor{\bsnm{Ruszczy\'nski}, \binits{A.}},
\bauthor{\bsnm{Shapiro}, \binits{A.}}:
\batitle{Optimization of convex risk functions}.
\bjtitle{Mathematics of Operations Research}
\bvolume{31}(\bissue{3}),
\bfpage{433}--\blpage{452}
(\byear{2006})
\end{barticle}
\endbibitem

\bibitem[\protect\citeauthoryear{Dayan}{1992}]{Dayan1992}
\begin{barticle}
\bauthor{\bsnm{Dayan}, \binits{P.}}:
\batitle{The convergence of {TD}($\lambda$) for general $\lambda$}.
\bjtitle{Machine Learning}
\bvolume{8},
\bfpage{341}--\blpage{362}
(\byear{1992})
\end{barticle}
\endbibitem

\bibitem[\protect\citeauthoryear{Peng}{1993}]{Peng1993}
\begin{botherref}
\oauthor{\bsnm{Peng}, \binits{J.}}:
Efficient dynamic programming-based learning for control.
PhD thesis,
Northeastern University
(1993)
\end{botherref}
\endbibitem

\bibitem[\protect\citeauthoryear{Dayan and Sejnowski}{1994}]{Dayan1994}
\begin{barticle}
\bauthor{\bsnm{Dayan}, \binits{P.}},
\bauthor{\bsnm{Sejnowski}, \binits{T.}}:
\batitle{{TD}($\lambda$) converges with probability 1}.
\bjtitle{Machine Learning}
\bvolume{14},
\bfpage{295}--\blpage{301}
(\byear{1994})
\end{barticle}
\endbibitem

\bibitem[\protect\citeauthoryear{Tsitsiklis}{1994}]{Tsitsiklis1994}
\begin{barticle}
\bauthor{\bsnm{Tsitsiklis}, \binits{J.N.}}:
\batitle{Asynchronous stochastic approximation and {Q}-learning}.
\bjtitle{Machine Learning}
\bvolume{16},
\bfpage{185}--\blpage{202}
(\byear{1994})
\end{barticle}
\endbibitem

\bibitem[\protect\citeauthoryear{Jaakkola et~al.}{1994}]{Jaakkola1994}
\begin{barticle}
\bauthor{\bsnm{Jaakkola}, \binits{T.}},
\bauthor{\bsnm{Jordan}, \binits{M.I.}},
\bauthor{\bsnm{Singh}, \binits{S.P.}}:
\batitle{On the convergence of stochastic iterative dynamic programming
  algorithms}.
\bjtitle{Neural Computation}
\bvolume{6},
\bfpage{1185}--\blpage{1201}
(\byear{1994})
\end{barticle}
\endbibitem

\bibitem[\protect\citeauthoryear{Abbasi-Yadkori
  et~al.}{2011}]{abbasi2011improved}
\begin{barticle}
\bauthor{\bsnm{Abbasi-Yadkori}, \binits{Y.}},
\bauthor{\bsnm{P{\'a}l}, \binits{D.}},
\bauthor{\bsnm{Szepesv{\'a}ri}, \binits{C.}}:
\batitle{Improved algorithms for linear stochastic bandits}.
\bjtitle{Advances in Neural Information Processing Systems}
\bvolume{24},
\bfpage{1}--\blpage{19}
(\byear{2011})
\end{barticle}
\endbibitem

\bibitem[\protect\citeauthoryear{Pe{\~n}a et~al.}{2009}]{pena2009self}
\begin{bbook}
\bauthor{\bsnm{Pe{\~n}a}, \binits{V.H.}},
\bauthor{\bsnm{Lai}, \binits{T.L.}},
\bauthor{\bsnm{Shao}, \binits{Q.-M.}}:
\bbtitle{Self-Normalized Processes: Limit Theory and Statistical Applications}.
\bpublisher{Springer},
\blocation{New York}
(\byear{2009})
\end{bbook}
\endbibitem

\bibitem[\protect\citeauthoryear{Glynn and Ormoneit}{2002}]{glynn2002hoeffding}
\begin{barticle}
\bauthor{\bsnm{Glynn}, \binits{P.W.}},
\bauthor{\bsnm{Ormoneit}, \binits{D.}}:
\batitle{Hoeffding's inequality for uniformly ergodic {M}arkov chains}.
\bjtitle{Statistics \& Probability Letters}
\bvolume{56}(\bissue{2}),
\bfpage{143}--\blpage{146}
(\byear{2002})
\end{barticle}
\endbibitem

\bibitem[\protect\citeauthoryear{Miasojedow}{2014}]{miasojedow2014hoeffding}
\begin{barticle}
\bauthor{\bsnm{Miasojedow}, \binits{B.}}:
\batitle{Hoeffding’s inequalities for geometrically ergodic {M}arkov chains
  on general state space}.
\bjtitle{Statistics \& Probability Letters}
\bvolume{87},
\bfpage{115}--\blpage{120}
(\byear{2014})
\end{barticle}
\endbibitem

\bibitem[\protect\citeauthoryear{Moulos}{2020}]{moulos2020hoeffding}
\begin{bchapter}
\bauthor{\bsnm{Moulos}, \binits{V.}}:
\bctitle{A {H}oeffding inequality for finite state {M}arkov chains and its
  applications to {M}arkovian bandits}.
In: \bbtitle{2020 IEEE International Symposium on Information Theory (ISIT)},
pp. \bfpage{2777}--\blpage{2782}
(\byear{2020}).
\bcomment{IEEE}
\end{bchapter}
\endbibitem

\bibitem[\protect\citeauthoryear{J{\"u}nger et~al.}{1995}]{junger1995traveling}
\begin{barticle}
\bauthor{\bsnm{J{\"u}nger}, \binits{M.}},
\bauthor{\bsnm{Reinelt}, \binits{G.}},
\bauthor{\bsnm{Rinaldi}, \binits{G.}}:
\batitle{The traveling salesman problem}.
\bjtitle{Handbooks in Operations Research and Management Science}
\bvolume{7},
\bfpage{225}--\blpage{330}
(\byear{1995})
\end{barticle}
\endbibitem

\bibitem[\protect\citeauthoryear{Cook et~al.}{2011}]{cook2011traveling}
\begin{bbook}
\bauthor{\bsnm{Cook}, \binits{W.J.}},
\bauthor{\bsnm{Applegate}, \binits{D.L.}},
\bauthor{\bsnm{Bixby}, \binits{R.E.}},
\bauthor{\bsnm{Chvatal}, \binits{V.}}:
\bbtitle{The Traveling Salesman Problem: a Computational Study}.
\bpublisher{Princeton University Press},
\blocation{Princeton, NJ}
(\byear{2011})
\end{bbook}
\endbibitem

\end{thebibliography}

\end{document}